\theoremstyle{plain}
\newtheorem{theorem}{Theorem}
\newtheorem{corollary}[theorem]{Corollary}
\newtheorem{proposition}[theorem]{Proposition}
\theoremstyle{definition}
\newtheorem*{remark*}{Remark}
\newcommand{\pr}{\mathbf P}
\newcommand{\e}{\mathbf E}
\begin{document}
\title[Maximum of the excursion]{Local tail asymptotics for the joint distribution of length and of maximum of a random walk excursion}

\author[Perfilev]{Elena Perfilev} 
\address{Institut f\"ur Mathematik, Universit\"at Augsburg, 86135 Augsburg, Germany}
\email{Elena.Perfilev@math.uni-augsburg.de}

\author[Wachtel]{Vitali Wachtel} 
\address{Institut f\"ur Mathematik, Universit\"at Augsburg, 86135 Augsburg, Germany}
\email{vitali.wachtel@math.uni-augsburg.de}

\begin{abstract}
This note is devoted to the study of the maximum of the excursion of a random walk with negative drift and light-tailed increments. More precisely, we determine the local
asymptotics of the joint distribution of the length, maximum and the time at which this
maximum is achieved. This result allows one to obtain a local central limit theorems for the length of the excursion conditioned on large values of the maximum.
\end{abstract}
\keywords{Random walk, excursion, Cramer-Lundberg, exponential change of measure}
\subjclass{Primary 60G50; Secondary 60G40, 60F17} 
\maketitle
\section{Introduction and statement of main results}
Let $\{S_n; n\geq 1\}$ be a random walk with i.i.d. increments $\lbrace X_k; k\geq 1\rbrace$.
We are interested in the asymptotic properties of some functionals of the excursion
$$
\{S_1,S_2,\ldots,S_\tau\},
$$
where
$$
\tau:=\inf\{n\ge1: S_n\le0\}.
$$
Clearly, $\tau$ is almost surely finite for all random walks satisfying
${\liminf_{n\to\infty}S_n=-\infty}$. This holds, in particular, for all random walks with
non-positive drift.

Denote 
$$
M_n:=\max_{k\leq n}S_k,\quad n\ge1.
$$
The main purpose of this note is to determine the asymptotic behaviour of local probabilities
of the vector $(M_\tau,\theta_\tau,\tau)$, where
$$
\theta_\tau:=\min\{n\ge0:S_n=M_\tau\}.
$$
We shall always assume that $S_n$ is integer-valued, has negative
drift, and satisfies the Cramer condition: there exists $\lambda>0$ such that
\begin{equation}
\label{Cramer.cond}
\varphi(\lambda):=\mathbf{E}e^{\lambda X_1}=1.
\end{equation}
Obviously, \eqref{Cramer.cond} implies that
$$
\e[X_1]=:-a<0.
$$

It is well-known, see Iglehart~\cite{Igl72}, that if, additionally, $\mathbf{E}[X_1e^{\lambda X_1}]<\infty$ then there
exists $c_0\in(0,1)$ such that, as $x\to\infty$,
\begin{equation}
\label{Cramer.local}
\mathbf{P}(M_\tau=x)\sim c_0 e^{-\lambda x},
\end{equation}
provided that the distribution of $X_1$ is aperiodic.

Moreover, according to Theorem II in Doney \cite{Doney89}, one has, for all aperiodic walks
satisfying \eqref{Cramer.cond},
\begin{equation}
\label{Doney.local}
\mathbf{P}(\tau=n)\sim c_1 n^{-3/2}\left(\mathbf{E}e^{\mu X_1}\right)^{n},
\end{equation}
where $\mu>0$ is uniquely determined by $\mathbf{E}[X_1e^{\mu X_1}]=0$.

These marginal asymptotics do not allow one to guess the right asymptotic behaviour of the
joint distribution. The reason is a very strong dependence between large values of $\tau$
and $M_\tau$. This can be illustrated by the optimal strategy for the occurence of the event
$\{M_\tau=x\}$.  First, the random walk goes up linearly with the rate
$$
\widehat{a}:=\mathbf{E}[X_1e^{\lambda X_1}]/\mathbf{E}[e^{\lambda X_1}].
$$
After reaching the level $x$, the random walk goes down with the standard rate $a$. As a result,
the stopping time $\tau$ is of the order $x/\widehat{a}+x/a$ on $\{M_{\tau}>x\}.$

According to Theorem 5.2 in Asmussen~\cite{Asmussen82}, conditioned on the large value of the maximum $M_\tau$, $\theta_\tau$ satisfies a central limit theorem. More precisely, 
\begin{equation}
\label{theta-clt}
\mathbf{P}\left(\frac{\theta_\tau-x/\widehat{a}}{\widehat{c}\sqrt{x}}<u\Big|M_\tau>x\right)
\to\Phi(u),\quad x\to\infty,
\end{equation}
where
$$
\widehat{c}:=\sqrt{\frac{\widehat{\sigma}^2}{\widehat{a}^3}}
\quad\text{and}\quad
\widehat{\sigma}^2:=\mathbf{E}[X_1^2e^{\lambda X_1}]-\widehat{a}^2.
$$
Clearly, this result improves the description of the first part of the optimal strategy leading to
$\{M_\tau>x\}$.

Our main result describes the asymptotic behaviour of the mass function of the vector $(M_\tau,\theta_\tau,\tau)$.
\begin{theorem}
\label{thm:excursion}
Assume that \eqref{Cramer.cond} holds and, furthermore, 
$$
\widehat{\sigma}^2=\mathbf{E}[X_1^2e^{\lambda X_1}]-\widehat{a}^2\in(0,\infty).
$$
 Assume also that the distribution of $X_1$ is aperiodic. Then there exists $Q>0$ such that,
 as $x\to\infty$,
 \begin{align*}
&e^{\lambda x}\pr(M_\tau=x,\theta_\tau=k,\tau=n+1)=\\
&\hspace{1cm}\frac{Q}{2\pi\sqrt{k(n-k)}}\exp\left\lbrace-\frac{(x-k\widehat{a})^2}{2\widehat{\sigma}^2k}-\frac{(x-a(n-k))^2}{2\sigma^2(n-k)}\right\rbrace+o\left(\frac{1}{\sqrt{k(n-k)}}\right)
\end{align*}
uniformly in $k,n$ such that $k<n$. The exact form of the constant $Q$ is given in \eqref{T8.6}.
\end{theorem}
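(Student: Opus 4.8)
The plan is to split the excursion at the first time $\theta_\tau=k$ at which the maximum is attained, analyse the ascending and descending pieces separately, and reassemble. \textbf{Factorisation at the maximum.} Since the $X_k$ are i.i.d., the event $\{M_\tau=x,\ \theta_\tau=k,\ \tau=n+1\}$ is the intersection of an event measurable with respect to $X_1,\dots,X_k$ — the walk climbs from $0$ to $x$ in $k$ steps, reaches $x$ for the first time at step $k$, and stays in the strip $[1,x-1]$ for $1\le j\le k-1$ — and an independent event measurable with respect to $X_{k+1},\dots,X_{n+1}$ — the walk drops from $x$ below $0$ in $m+1$ steps, $m:=n-k$, staying in $[1,x]$ in between. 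Hence
\[
\pr(M_\tau=x,\theta_\tau=k,\tau=n+1)=p_k(x)\,q_m(x),
\]
with $p_k(x)=\pr(1\le S_j\le x-1,\ 1\le j\le k-1;\ S_k=x)$ and $q_m(x)=\pr(0\le U_i\le x-1,\ 1\le i\le m;\ U_{m+1}\ge x)$, where $U$ is the walk with increments $-X_i$ (positive drift $a$, variance $\sigma^2$). As this identity is exact, it suffices to prove the one-sided local asymptotics
\[
e^{\lambda x}p_k(x)\sim\frac{c_1}{\sqrt k}\,e^{-(x-\widehat a k)^2/(2\widehat\sigma^2 k)},\qquad q_m(x)\sim\frac{c_2}{\sqrt m}\,e^{-(x-am)^2/(2\sigma^2 m)},
\]
each uniformly on the window $|x-\widehat a k|\le A\sqrt k$, resp.\ $|x-am|\le A\sqrt m$, together with the crude uniform bounds $e^{\lambda x}p_k(x)=O(1/\sqrt k)$ everywhere and $e^{\lambda x}p_k(x)=o(1/\sqrt k)$ off that window (as $A\to\infty$), and likewise for $q_m$; then $Q=2\pi c_1 c_2$.

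\textbf{Ascending part.} Applying the Cramér change of measure with density $e^{\lambda S_k}$ on $\mathcal F_k$ (legitimate because $\varphi(\lambda)=1$) produces a walk with positive drift $\widehat a$ and variance $\widehat\sigma^2$, and $p_k(x)=e^{-\lambda x}\,\widehat\pr(1\le S_j\le x-1,\ 1\le j\le k-1;\ S_k=x)$. Under $\widehat\pr$ the walk is ballistic, so the strip constraint is active only at the two ends of $[0,k]$: near $j=0$ it forces the walk to stay positive, and, after reversing the last block (which preserves the law, the increments being i.i.d.), the upper constraint $S_j\le x-1$ near $j=k$ likewise becomes a ``stay positive'' constraint while the pinning $S_k=x$ becomes an ordinary local-limit pinning at $0$ with no overshoot. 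Splitting $[0,k]$ at a slowly growing level $t$ and at $k-t$: on the middle block the strip constraint holds with conditional probability $1+o(1)$ uniformly (a maximal inequality), each end block contributes — by the known local behaviour of random walks conditioned to stay positive — a factor converging to $p_+:=\widehat\pr(S_j\ge1\ \forall j\ge1)$, and the bulk is governed by the local CLT; convolving the three Gaussians, whose variances sum to $\widehat\sigma^2 k(1+o(1))$, gives $c_1=p_+^2/\sqrt{2\pi\widehat\sigma^2}$. The crude bounds follow from $e^{\lambda x}p_k(x)\le\widehat\pr(S_k=x)$ and Stone's local limit theorem.

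\textbf{Descending part.} No change of measure is needed here. On the event defining $q_m(x)$ the first-passage time $T_x:=\inf\{i:U_i\ge x\}$ equals $m+1$; peeling off an initial block of slowly growing length separates the ``stay nonnegative'' constraint, contributing a factor converging to $p_+^{U}:=\pr(U_i\ge0\ \forall i\ge1)$, from the first-passage event, and a local renewal theorem for $T_x$ — valid since $U$ has positive drift, finite variance and is aperiodic — yields $\pr(T_x=m+1)\sim(a/\sigma)(2\pi m)^{-1/2}e^{-(x-am)^2/(2\sigma^2 m)}$ on the window $|x-am|\le A\sqrt m$. Hence $c_2=p_+^{U}a/(\sigma\sqrt{2\pi})$, and the crude bounds come from $q_m(x)\le\pr(T_x=m+1)$ and the same renewal estimate.

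\textbf{Assembling and uniformity.} Multiplying the two asymptotics on the set where both windows hold gives the product of Gaussians with relative error $o(1)$; there both exponents are $O(A^2)$, so this is the absolute error $o((k(n-k))^{-1/2})$, with $Q=2\pi c_1 c_2$ (the explicit value being the one recorded in \eqref{T8.6}). When $|x-\widehat a k|>A\sqrt k$, bound $e^{\lambda x}p_k(x)$ by $o(1/\sqrt k)$ and $q_m(x)$ by $O(1/\sqrt m)$, so that both the true quantity and the claimed leading term are $o((k(n-k))^{-1/2})$ there; symmetrically when $|x-am|>A\sqrt m$. Letting $A\to\infty$ after $x\to\infty$ yields the uniform statement. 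The principal obstacle is the decoupling in the ascending part: showing, uniformly in $k$ and over the $\sqrt k$-window of $x$, that the strip constraint splits into exactly two survival factors $p_+$ times a bulk local CLT — in particular handling the upper end $x$, where the ``stay below $x$'' constraint and the exact pinning $S_k=x$ operate together, and verifying that the middle block contributes $1+o(1)$ — with the truncation $t$ chosen to grow slowly enough for the local CLT on each block yet fast enough for the conditioned-to-stay-positive factors to converge; carrying the estimates uniformly down to small $k$ and small $n-k$, where only the crude bounds are used, is a further, more bookkeeping-heavy, point.
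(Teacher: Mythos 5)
Your plan is, in essence, the paper's own: factor the excursion at the argmax into two independent blocks, Cramér-tilt the ascending block so it gains positive drift $\widehat a$ and variance $\widehat\sigma^2$, time-reverse the descending block, and extract from each a Gaussian local limit multiplied by two survival probabilities coming from the two-sided boundary constraint. In the paper this last step is Theorem~\ref{snmax}, built on Proposition~\ref{genIgl} and Theorem~\ref{T.1}, applied once to the tilted ascending block and once (after summing out the undershoot $y$) to the reversed descending block. Two remarks.

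First, your descending piece diverges slightly from the paper's: instead of applying the corridor LLT with varying cushion $y$ and carrying the sum $\sum_{y\ge 1}\pr(\overline\tau_y=\infty)\pr(X_1\le -y)$ in the constant (this is what appears in \eqref{T8.6}), you invoke a local first-passage theorem $\pr(T_x=m+1)\sim (a/\sigma)(2\pi m)^{-1/2}e^{-(x-am)^2/(2\sigma^2 m)}$. This is a legitimate shortcut, and your $c_2$ does match \eqref{T8.6} because $\sum_{y\ge 1}\pr(\overline\tau_y=\infty)\pr(X_1\le -y)=a$; but you are citing a nontrivial local theorem, including its exact normalisation $a/\sigma$, without proof or reference, so this needs to be backed up.

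Second, and more seriously, the centerpiece of your ascending-block analysis --- that $\widehat{\mathbf{P}}(1\le S_j\le x-1,\ j<k;\ S_k=x)$ factors asymptotically as $p_+^2\,(2\pi\widehat\sigma^2 k)^{-1/2}e^{-(x-\widehat a k)^2/(2\widehat\sigma^2 k)}$ uniformly --- is precisely Theorem~\ref{snmax} with $y=z=0$, and this is where all of the paper's Section~2 goes: a CLT for the walk conditioned to stay positive from an arbitrary level (Proposition~\ref{genIgl}), upgraded to a local CLT (Theorem~\ref{T.1}), and then the two-sided corridor version (Theorem~\ref{snmax}) via a split at $n/2$ with Kolmogorov-inequality control of the boundary constraints. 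Your three-block sketch with a slowly growing $t$ is plausible, but at the two ends it would implicitly reprove Theorem~\ref{T.1}, and you do not say how the errors combine uniformly when $k$ or $n-k$ is small relative to $x$ (the paper keeps error terms of the form $o(1/\sqrt{x+k})$ for exactly this reason). You flag this as the principal obstacle, which is honest; it does mean the proposal is a correct plan rather than a complete proof.
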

The proof strategy in Theorem~\ref{thm:excursion}  enables one to prove local limit theorems for $\tau$ and $\theta_\tau$ conditioned on
$M_\tau=x$.
\begin{corollary}
\label{cor:tau-theta}
Under the conditions of Theorem~\ref{thm:excursion},
as $x\to\infty$,
\begin{equation}
\label{theta-local}
\sqrt{x}\mathbf{P}(\theta_\tau=k|M_\tau=x)=\frac{\widehat{a}^{3/2}}{\sqrt{2\pi\widehat\sigma^2}}
\exp\left\{-\frac{\widehat{a}^3(k-x/\widehat{a})^2}{2x\widehat\sigma^2}\right\}+o(1)
\end{equation}
and 
\begin{equation}
\label{theta-tau-local}
\sqrt{x}\mathbf{P}(\tau-\theta_\tau=k|M_\tau=x)=\frac{a^{3/2}}{\sqrt{2\pi\sigma^2}}
\exp\left\{-\frac{a^3(k-x/a)^2}{2x\sigma^2}\right\}+o(1)
\end{equation}
uniformly in $k$. Furthermore, conditionally on $M_\tau=x$, the random variables $\theta_\tau$ and $\tau-\theta_\tau$
are independent. Consequently,
\begin{equation}
\label{tau-local}
\sqrt{x}\mathbf{P}(\tau=n|M_\tau=x)=\frac{1}{\sqrt{2\pi\Sigma^2}}
\exp\left\{-\frac{(n-Ax)^2}{2x\Sigma^2}\right\}+o(1),
\end{equation}
where 
$$
A:=\frac{1}{a}+\frac{1}{\widehat{a}}
\quad\text{and}\quad
\Sigma^2:=\frac{\sigma^2}{a^3}+
\frac{\widehat\sigma^2}{\widehat a^3}.
$$
\end{corollary}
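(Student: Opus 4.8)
The plan is to derive Corollary~\ref{cor:tau-theta} from Theorem~\ref{thm:excursion} together with one structural fact, which already yields the \emph{exact} conditional independence and needs no asymptotics.

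\emph{Conditional independence.} Fix $x\ge2$; on $\{M_\tau=x\}$ one has $1\le\theta_\tau<\tau$ and $S_0=0$. Let $\widetilde S_j:=X_{k+1}+\dots+X_{k+j}$ be the partial sums of the increments following time $k$. Decomposing the excursion at $\theta_\tau$ via the Markov property, one checks that
\[
\{M_\tau=x,\ \theta_\tau=k,\ \tau=n+1\}=A_{x,k}\cap \widetilde B_{x,n-k},
\]
where $A_{x,k}=\{1\le S_j\le x-1\ (1\le j<k),\ S_k=x\}$ is $\sigma(X_1,\dots,X_k)$-measurable and $\widetilde B_{x,m}=\{-(x-1)\le\widetilde S_j\le0\ (1\le j\le m),\ \widetilde S_{m+1}\le-x\}$ is $\sigma(X_{k+1},\dots,X_{k+m+1})$-measurable: on $A_{x,k}$ one has $S_{k+j}=x+\widetilde S_j$, so keeping $\widetilde S_j$ in $[-(x-1),0]$ is exactly what forces the running maximum to stay equal to $x$ and $\theta_\tau$ to remain the first epoch at which it is attained, while the first visit of $\widetilde S$ to $(-\infty,-x]$ is $\tau-\theta_\tau$. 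Since the two events are measurable with respect to disjoint blocks of increments and $\pr(\widetilde B_{x,m})$ does not depend on $k$, we get $\pr(M_\tau=x,\theta_\tau=k,\tau=n+1)=\pr(A_{x,k})\pr(\widetilde B_{x,n-k})$; writing $\alpha_x=\sum_{k\ge1}\pr(A_{x,k})$ and $\beta_x=\sum_{m\ge0}\pr(\widetilde B_{x,m})$ (each at most $1$, being sums over disjoint events) and summing, $\pr(M_\tau=x)=\alpha_x\beta_x$, hence $\pr(\theta_\tau=k\mid M_\tau=x)=\pr(A_{x,k})/\alpha_x$ and $\pr(\tau-\theta_\tau=m+1\mid M_\tau=x)=\pr(\widetilde B_{x,m})/\beta_x$, whose product equals the joint conditional probability $\pr(A_{x,k})\pr(\widetilde B_{x,m})/\pr(M_\tau=x)$. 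This is the asserted independence.

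\emph{The marginal laws.} For fixed $k$ I sum the asymptotics of Theorem~\ref{thm:excursion} over $n>k$. The $k$-dependent factor comes out, and the remaining Riemann sum converges,
\[
\sum_{m\ge1}\frac1{\sqrt m}\exp\Bigl\{-\frac{(x-am)^2}{2\sigma^2 m}\Bigr\}\ \longrightarrow\ \int_{\mathbb R}\sqrt{\tfrac a x}\;e^{-a^3 s^2/(2\sigma^2 x)}\,ds=\frac{\sqrt{2\pi}\,\sigma}{a},
\]
so that, uniformly in $k$,
\[
e^{\lambda x}\pr(\theta_\tau=k,\,M_\tau=x)=\frac{Q\sqrt{2\pi}\,\sigma}{2\pi a}\,\frac1{\sqrt k}\,\exp\Bigl\{-\frac{(x-k\widehat a)^2}{2\widehat\sigma^2 k}\Bigr\}+o\Bigl(\frac1{\sqrt k}\Bigr).
\]
Summing over $k$ and using $\sum_k k^{-1/2}\exp\{-(x-k\widehat a)^2/(2\widehat\sigma^2 k)\}\to\sqrt{2\pi}\,\widehat\sigma/\widehat a$ gives $e^{\lambda x}\pr(M_\tau=x)\to Q\sigma\widehat\sigma/(a\widehat a)$, which identifies the constant in \eqref{Cramer.local}. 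Dividing the two preceding displays and simplifying yields \eqref{theta-local}; summing Theorem~\ref{thm:excursion} over $k<n$ with $n-k$ fixed instead and dividing by the same limit yields \eqref{theta-tau-local}.

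\emph{The law of $\tau$ and the main obstacle.} By the conditional independence just established,
\[
\pr(\tau=n\mid M_\tau=x)=\sum_{k}\pr(\theta_\tau=k\mid M_\tau=x)\,\pr(\tau-\theta_\tau=n-k\mid M_\tau=x),
\]
which by \eqref{theta-local}--\eqref{theta-tau-local} is a discrete convolution of two asymptotically Gaussian local densities with means $x/\widehat a,\ x/a$ and variances $x\widehat\sigma^2/\widehat a^3,\ x\sigma^2/a^3$; passing to the convolution integral (justified by the uniform $o(1)$ in \eqref{theta-local}--\eqref{theta-tau-local} and tightness of the two families) gives the Gaussian density with mean $Ax$ and variance $x\Sigma^2$, i.e.\ \eqref{tau-local}. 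The delicate point is the interchange of limit and summation in the middle step, since the error $o(k^{-1/2}(n-k)^{-1/2})$ in Theorem~\ref{thm:excursion} is uniform but not summable in $n$: one restricts to the bulk $\{|n-k-x/a|\le R(x)\sqrt x\}$ with $R(x)\to\infty$ slowly enough that $R(x)$ times the uniform error still vanishes while the Gaussian mass outside the bulk is negligible, and controls the complementary range (and the boundary term $n=k$) by crude large-deviation and strip-confinement estimates for the blocks $A_{x,k}$ and $\widetilde B_{x,m}$ — for instance $\pr(A_{x,k})\le\pr(S\text{ reaches }x\text{ before }(-\infty,0])\le Ce^{-\lambda x}$, and $\pr(\widetilde B_{x,m})$ decays exponentially in $m$ once $m>(1+\eta)x/a$ — the same truncation being needed for the outer sum over $k$.
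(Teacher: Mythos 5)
Your proof of the conditional independence is clean and essentially coincides with what the paper uses implicitly: the paper factorizes $\pr(\theta_\tau=k,\,\tau=k+j,\,M_\tau=x)=\pr(S_k=x,M_{k-1}<x,\tau>k)\,\pr(\tau_+>\tau_x=j)$, which is exactly your $\pr(A_{x,k})\pr(\widetilde B_{x,j-1})$. The constant bookkeeping (computing $c_0=Q\sigma\widehat\sigma/(a\widehat a)$ and dividing) is also correct and gives the right marginals, whereas the paper avoids computing $c_0$ explicitly by normalizing via \eqref{loc.cond}. So far, so good.

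The genuine issue is your route to \eqref{theta-local} and \eqref{theta-tau-local}. You derive them by \emph{summing} the asymptotics of Theorem~\ref{thm:excursion} over $n$ (resp.\ $k$), but the error there is $o(1/\sqrt{k(n-k)})$ uniformly in $(k,n)$, which is not summable in $n-k$: $\sum_m m^{-1/2}$ diverges, and the $o$-term need not be small where the Gaussian factor is. You correctly flag this obstacle, but only in the paragraph on \eqref{tau-local}; the same obstacle afflicts the derivation of the two marginal laws and is left unaddressed there. A truncation-to-the-bulk argument, as you sketch for \eqref{tau-local}, would also be needed for the marginals, together with a tail bound for \emph{small} $m$ (Kolmogorov-type, to control $\pr(\widetilde B_{x,m})$ when $m\ll x/a$), not only the large-$m$ exponential decay you mention. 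The paper sidesteps this entirely: having established the product structure $\pr(\theta_\tau=k,M_\tau=x)=\pr(A_{x,k})\beta_x$, it applies Theorem~\ref{snmax} (which carries a uniform error $o(1/\sqrt{k+x})$) directly to $\pr(A_{x,k})=e^{-\lambda x}\widehat\pr(S_k=x,M_{k-1}<x,\tau>k)$, and similarly applies Theorem~\ref{snmax} to $\overline S$ for $\pr(\tau_+>\tau_x=j)$ after handling $j\le x/2a$ by a Kolmogorov bound. This is both more direct and stronger in the error it delivers. Your overall plan is salvageable, but as written the marginal derivations have the same gap you identify later, and patching it would essentially reproduce the truncation machinery whose avoidance is the point of the paper's use of Theorem~\ref{snmax}.
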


Since the tail of $M_\tau$ is exponentially decreasing, one infers from \eqref{theta-clt} that
\begin{equation}
\label{loc.cond}
\mathbf{P}\left(\frac{\theta_\tau-x/\widehat{a}}{\widehat{c}\sqrt{x}}<u\Big|M_\tau=x\right)
\to\Phi(u),\quad x\to\infty.
\end{equation}
Thus, \eqref{theta-local} is a local version of this central limit theorem for $\theta_\tau$.

Our approach to the excursions of random walks satisfying the Cramer condition \eqref{Cramer.cond}
is based on the standard change of measure:
$$
\widehat{\mathbf{P}}(X_1\in dx)=e^{\lambda x}\mathbf{P}(X_1\in dx).
$$
Under this new measure  the drift of $S_n$ becomes positive:
$\widehat{\mathbf{E}}X_1=\widehat{a}>0.$  Furthermore, the variance of $X_k$ under the new measure
becomes equal to $\widehat{\sigma}^2$. For that reason we  study in the next section local probability
asymptotics for conditioned random walks with positive drift.  These results are later used in the proof of
Theorem~\ref{thm:excursion}, which is given in Section 3.

\section{Local limit theorems for functionals of a random walk with positive drift}
In this section we shall always assume that
$\mathbf{E}X_1=a>0$ and $\mathbf{E}(X_1-a)^2=\sigma^2\in(0,\infty).$
\subsection{Local limit theorem for a walk conditioned to stay positive}
Define the stopping times
$$
\tau_z=\inf\lbrace n\geq 1:z+S_n\leq 0\rbrace,\hspace{0,5cm} z\geq 0.
$$
In the case of random walks with positive drift one has $\mathbf{P}(\tau_z=\infty)>0$
and, consequently, the asymptotic behaviour of $S_n$ conditioned to stay positive should
be the same as for the unconditioned walk. In the  case $z=0$ Iglehart~\cite{DonIgl} has
shown that for conditioned random walk the standard form of the functional central limit
theorem is still valid, see Proposition 2.1 in \cite{DonIgl}. Our first result shows that CLT for marginals holds for all starting points $z$.
\begin{proposition}\label{genIgl}
Assume that $a>0$ and that $\sigma^2\in(0,\infty)$. Then, for every $z\ge0$,
\begin{equation}\label{conddist}
\pr\left(\frac{S_n-na}{\sigma\sqrt{n}}\leq x\Big\vert\tau_z>n\right)
\rightarrow\Phi(x),\hspace{0,5cm}x\in\mathbb{R},
\end{equation}
where $\Phi(x)$ is the the standard normal distribution function.
\end{proposition}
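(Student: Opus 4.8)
The plan is to reduce the conditioned CLT to the known unconditioned CLT together with good control of the survival probability $\pr(\tau_z>n)$. Since the walk has positive drift, $\pr(\tau_z=\infty)>0$, so one expects $\pr(\tau_z>n)\to\pr(\tau_z=\infty)>0$ and the conditioning to become asymptotically harmless. The first step is to establish this: show that $\{\tau_z=\infty\}$ has positive probability (immediate from $S_n/n\to a>0$ a.s. and a shift argument) and that $\pr(\tau_z>n)\to\pr(\tau_z=\infty)$, which is just continuity of measure since $\{\tau_z>n\}\downarrow\{\tau_z=\infty\}$. In particular $\pr(\tau_z>n)$ is bounded below by a positive constant for all large $n$.

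Second, I would write, for a fixed $x\in\mathbb R$ and the event $B_n=\{(S_n-na)/(\sigma\sqrt n)\le x\}$,
$$
\pr\bigl(B_n\mid\tau_z>n\bigr)=\frac{\pr(B_n,\tau_z>n)}{\pr(\tau_z>n)}
=\frac{\pr(B_n)-\pr(B_n,\tau_z\le n)}{\pr(\tau_z>n)}.
$$
The numerator's first term converges to $\Phi(x)$ by the ordinary CLT. The key estimate is that $\pr(B_n,\tau_z\le n)\to\pr(\tau_z<\infty)\cap(\text{limit of }B_n)$, but more simply: $\pr(B_n,\tau_z\le n)$ and $\pr(B_n,\tau_z=\infty)$ both need to be pinned down. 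The cleanest route is to show $\pr(B_n,\tau_z=\infty)\to\pr(\tau_z=\infty)\Phi(x)$, i.e. that the events $B_n$ and $\{\tau_z=\infty\}$ are asymptotically independent. This follows because $\{\tau_z=\infty\}$ depends (up to small error) only on the walk after a large but fixed time: for any $\varepsilon>0$ choose $m$ with $\pr(\inf_{k\ge m}(z+S_k)\le 0,\ \tau_z>m)<\varepsilon$ — possible since $S_k\to\infty$ — so that $\{\tau_z=\infty\}$ agrees up to probability $\varepsilon$ with $\{\tau_z>m\}$, an event in $\mathcal F_m$. Then $\pr(B_n\mid\mathcal F_m)\to\Phi(x)$ in probability by the CLT applied to the post-$m$ increments (conditionally on $\mathcal F_m$, $S_n-S_m$ is an independent copy of the walk of length $n-m$), and multiplying by the indicator of the $\mathcal F_m$-event and letting $n\to\infty$, then $\varepsilon\to0$, gives the claim.

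Assembling: $\pr(B_n,\tau_z>n)=\pr(B_n)-\pr(B_n,\tau_z<\infty)+\pr(B_n,\tau_z>n,\tau_z<\infty)$, where the last term is at most $\pr(\tau_z>n,\tau_z<\infty)=\pr(\tau_z>n)-\pr(\tau_z=\infty)\to0$; and $\pr(B_n,\tau_z<\infty)=\pr(B_n)-\pr(B_n,\tau_z=\infty)\to\Phi(x)-\pr(\tau_z=\infty)\Phi(x)$. Hence $\pr(B_n,\tau_z>n)\to\pr(\tau_z=\infty)\Phi(x)$, and dividing by $\pr(\tau_z>n)\to\pr(\tau_z=\infty)>0$ yields $\pr(B_n\mid\tau_z>n)\to\Phi(x)$.

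The main obstacle is the asymptotic-independence step, i.e. making precise that $\{\tau_z=\infty\}$ is ``essentially'' measurable with respect to a fixed finite-horizon $\sigma$-field so that the CLT for the tail increments decouples it from $B_n$. Everything else is soft (continuity of measure, the ordinary CLT, Markov property). One should also double-check uniformity issues are not needed here — the proposition only claims convergence for each fixed $x$, and $\Phi$ is continuous, so pointwise convergence suffices and no Polya-type upgrade is required.
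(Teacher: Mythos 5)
Your proof is correct and follows essentially the same route as the paper's: both arguments reduce the conditional CLT to the unconditioned one by decoupling $B_n$ from a finite-horizon approximation of the survival event via the Markov property, then dispose of the remainder by letting the horizon tend to infinity using $\pr(\tau_z>m)\downarrow\pr(\tau_z=\infty)>0$. The only cosmetic difference is that the paper conditions on the exact values $\{\tau_z=k\}$ for $k\le N$ and bounds the tail sum by $\pr(N\le\tau_z<\infty)$, whereas you condition on $\{\tau_z>m\}$ and approximate $\{\tau_z=\infty\}$ directly.
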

\begin{proof}
As in the proof of Proposition~2.1 from \cite{DonIgl}, we shall use the decomposition
\begin{align}
\label{decomp}
\nonumber
&\pr\left(\frac{S_n-na}{\sigma\sqrt{n}}\leq x,\tau_z>n\right)\\
&\hspace{1cm}=\pr\left(\frac{S_n-na}{\sigma\sqrt{n}}\leq x\right)
-\sum_{k=1}^n\pr\left(\frac{S_n-na}{\sigma\sqrt{n}}\leq x,\tau_z=k\right).
\end{align}
By the central limit theorem,
\begin{equation}
\label{normdens}
\pr\left(\frac{S_n-na}{\sigma\sqrt{n}}\leq x\right)\rightarrow\Phi(x).
\end{equation}
By the Markov property, for every fixed $k$,
\begin{align*}
\pr\left(\frac{S_n-na}{\sigma\sqrt{n}}\leq x, \tau_z=k \right)
=\int_{-\infty}^{-z}\pr(S_k\in dy,\tau_z=k)\pr\left(\frac{y+S_{n-k}-na}{\sigma\sqrt{n}}\leq x\right).
\end{align*}
Then, using \eqref{normdens} and the dominated convergence, we obtain
$$
\pr\left(\frac{S_n-na}{\sigma\sqrt{n}}\leq x,\tau_z=k\right)\rightarrow\Phi(x)\pr(\tau_z=k).
$$
Consequently, for every fixed $N>1$,
\begin{align}\label{1.part}
\nonumber
&\pr\left(\frac{S_n-na}{\sigma\sqrt{n}}\leq x\right)
-\sum_{k=1}^N\pr\left(\frac{S_n-na}{\sigma\sqrt{n}}\leq x,\tau_z=k\right)\\
&\hspace{1cm}\rightarrow\Phi(x)\left(1-\sum_{k=1}^N\pr(\tau_z=k)\right)=\Phi(x)\pr(\tau_z>N).
\end{align}
For the tail of the sum one has, uniformly in $n>N$,
\begin{align}
\label{2.part}
\nonumber
0<\sum_{k=N}^n\pr\left(\frac{S_n-na}{\sigma\sqrt{n}}\leq x,\tau_z=k\right)
&\leq\sum_{k=N}^n\pr(\tau_z=k)\\
&=\pr(N\leq\tau_z\leq n)\leq \pr(N\leq\tau_z<\infty).
\end{align}
Combining \eqref{1.part} and \eqref{2.part} and letting $N\to\infty$, we obtain
\begin{align*}
\pr\left(\frac{S_n-na}{\sigma\sqrt{n}}\leq x,\tau_z>n\right)\rightarrow\Phi(z)\pr(\tau_z=\infty).
\end{align*}
The positivity of the drift implies that $\pr(\tau_z=\infty)$ is positive. Therefore, the previous convergence
is equivalent to the statement of the proposition.
\end{proof}
We now turn to the corresponding local limit theorem.
\begin{theorem}
\label{T.1}
Assume that $a>0$ and that $\sigma^2\in(0,\infty)$. Assume also, that the distribution of $X_1$ is aperiodic.
Then, uniformly in $x>-z$, 
\begin{align}
\label{main}
\pr(S_n=x,\tau_z>n)=\frac{\mathbf{P}(\tau_z=\infty)}{\sqrt{2\pi\sigma^2n}}e^{-(x-na)^2/2\sigma^2n}
+o\left(\frac{1}{\sqrt{n}}\right).
\end{align}
\end{theorem}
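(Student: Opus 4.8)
The plan is to prove \eqref{main} by combining the ordinary local central limit theorem for $S_n$ with a decomposition on the first passage time $\tau_z$, exactly mirroring the structure of the proof of Proposition~\ref{genIgl} but on the level of local probabilities rather than distribution functions. Write
\[
\pr(S_n=x,\tau_z>n)=\pr(S_n=x)-\sum_{k=1}^n\pr(S_n=x,\tau_z=k).
\]
By the classical (aperiodic) local CLT, $\pr(S_n=x)=\frac{1}{\sqrt{2\pi\sigma^2 n}}e^{-(x-na)^2/2\sigma^2 n}+o(1/\sqrt n)$ uniformly in $x$, which produces the main term with the constant $1$ in place of $\pr(\tau_z=\infty)$; the remaining task is to show that the subtracted sum contributes $(1-\pr(\tau_z=\infty))\frac{1}{\sqrt{2\pi\sigma^2 n}}e^{-(x-na)^2/2\sigma^2 n}+o(1/\sqrt n)$ uniformly in $x>-z$.

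For the sum I would split at a level $N$. For the head $\sum_{k=1}^N$, use the Markov property to write, for each fixed $k$,
\[
\pr(S_n=x,\tau_z=k)=\int_{-\infty}^{-z}\pr(z+S_k\in dy,\tau_z=k)\,\pr(S_{n-k}=x-y),
\]
and apply the local CLT to $\pr(S_{n-k}=x-y)$. Since $k$ is fixed and $y$ ranges over a set on which $\pr(z+S_k\in dy,\tau_z=k)$ is a finite measure, one gets $\pr(S_n=x,\tau_z=k)=\frac{\pr(\tau_z=k)}{\sqrt{2\pi\sigma^2 n}}e^{-(x-na)^2/2\sigma^2 n}+o(1/\sqrt n)$ after checking that $(x-y-(n-k)a)^2/((n-k)\sigma^2)$ and $(x-na)^2/(n\sigma^2)$ differ by $o(1)$ uniformly in $x$ and in $y$ over the relevant (effectively bounded, up to exponentially small mass) range — here one uses $y\le -z\le 0$ together with the fact that contributions from $|y|$ large are negligible because $\e|X_1|<\infty$ controls the tail of $\pr(z+S_k\in dy)$. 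Summing the $N$ terms gives the head contribution $\frac{\pr(\tau_z\le N)}{\sqrt{2\pi\sigma^2 n}}e^{-(x-na)^2/2\sigma^2 n}+o(1/\sqrt n)$.

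The main obstacle is the uniform control of the tail $\sum_{k=N}^{n}\pr(S_n=x,\tau_z=k)$: the crude bound by $\pr(\tau_z\ge N)$ used in Proposition~\ref{genIgl} is useless here because we need it to be $o(1/\sqrt n)$, i.e. of the same order as the main term, uniformly in $x$. The fix is to again use the Markov property at time $k$, but now bound $\pr(S_{n-k}=x-y)\le C/\sqrt{n-k}$ by the uniform local CLT bound for a range of $k$ with $n-k\ge n/2$, getting $\sum_{k=N}^{n/2}\pr(S_n=x,\tau_z=k)\le \frac{C}{\sqrt n}\pr(\tau_z\ge N)$, which is $o(1/\sqrt n)$ after letting $N\to\infty$; and for $k>n/2$ one exploits that $\{S_n=x,\tau_z=k\}$ forces $S_k\le -z$, so by the Markov property run backwards (or by decomposing at time $k$ and bounding $\pr(z+S_k\in dy,\tau_z=k)$) together with the linear growth $na$ of the mean, the event $\{z+S_k\le 0\}$ for $k>n/2$ is a large-deviation event whose probability, even summed against $\pr(S_{n-k}=x-y)\le C$, is exponentially small and hence $o(1/\sqrt n)$ uniformly in $x$. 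Assembling the head and tail estimates and letting $N\to\infty$ yields the constant $\pr(\tau_z=\infty)=1-\lim_{N\to\infty}\pr(\tau_z\le N)$, completing the proof; the only real work is packaging the uniform-in-$x$ local CLT bound $\sup_x\pr(S_m=x)\le C/\sqrt m$ (valid under aperiodicity and finite variance) together with the large-deviation estimate $\pr(z+S_k\le 0)\le e^{-ck}$ for $k$ large, which follows from $\e X_1=a>0$ and a standard Chernoff bound.
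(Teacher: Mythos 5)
Your decomposition $\pr(S_n=x,\tau_z>n)=\pr(S_n=x)-\sum_{k=1}^n\pr(S_n=x,\tau_z=k)$ is a natural local analogue of the paper's proof of Proposition~\ref{genIgl}, and it is \emph{not} the route the paper takes for Theorem~\ref{T.1}: the paper instead splits at the midpoint $m=[n/2]$ via the Markov property, writes $\pr(S_n=x,\tau_z>n)=\sum_y\pr(S_m=y,\tau_z>m)\pr(S_{n-m}=x-y,\min_{j\le n-m}S_j\ge-y-z)$, and then kills the minimum constraint for typical $y$ using Kolmogorov's inequality while the atypical $y\le am/2$ are absorbed by $\sup_u\pr(S_{n-m}=u)\le c/\sqrt{n-m}$ together with $\pr(S_m\le am/2)\to0$. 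The two decompositions are genuinely different, and the reason the paper chooses the midpoint split becomes clear exactly at the point where your argument breaks.

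The gap is in your treatment of the far tail $\sum_{k>n/2}^n\pr(S_n=x,\tau_z=k)$. You claim $\pr(z+S_k\le0)\le e^{-ck}$ by a ``standard Chernoff bound'' from $\e X_1=a>0$. But Theorem~\ref{T.1} assumes only $\sigma^2\in(0,\infty)$ and aperiodicity; there is no exponential moment hypothesis in Section 2 (the Cramér condition is only used later, after the change of measure). Under finite variance alone the bound $\pr(S_k\le-z)\le e^{-ck}$ is simply false: e.g.\ if $\pr(X_1<-t)\asymp t^{-3}$ then $\e X_1^2<\infty$ yet $\pr(S_k\le0)\gtrsim k^{-3}$, and with heavier (but still square-integrable) tails one only gets $\sum_k\pr(S_k\le-z)<\infty$ by Hsu--Robbins/Erdős with no useful rate. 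Consequently the bound $\sum_{k>n/2}\pr(\tau_z=k)\cdot\sup_u\pr(S_{n-k}=u)$, or any variant based on $\pr(S_k\le-z)$ plus concentration, need not be $o(1/\sqrt n)$. (The same optimistic ``exponentially small mass'' phrase appears in your head estimate $k\le N$; there it is harmless, since tightness of the law of $S_k$ for fixed $k$ plus the uniform bound $\sup_u\pr(S_{n-k}=u)\le c/\sqrt{n}$ suffice, but for $k>n/2$ there is no such rescue.) To make your route work one would have to replace the Chernoff claim by the very same midpoint/Kolmogorov argument that the paper uses, namely rewrite $\pr(S_n=x,\,n/2<\tau_z\le n)$ via the Markov property at $m=[n/2]$ and control $\pr(\min_{j\le n-m}S_j<-y-z)$ by $\mathbf{Var}(S_{n-m})/(am/2)^2=O(1/n)$ for $y>am/2$, handling $y\le am/2$ by concentration as above. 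At that point you have essentially reproduced the paper's proof inside your decomposition, so the Chernoff shortcut is not a detail but the crux.

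One smaller imprecision: in the range $N\le k\le n/2$ you bound the sum by $\frac{C}{\sqrt n}\pr(\tau_z\ge N)$ and say this is $o(1/\sqrt n)$ ``after letting $N\to\infty$.'' But $\pr(\tau_z\ge N)\to\pr(\tau_z=\infty)>0$; what actually vanishes is $\pr(N\le\tau_z<\infty)=\sum_{k\ge N}\pr(\tau_z=k)$, which is the quantity your sum is really bounded by. This is a writing slip rather than a mathematical error, but it should be fixed since the whole point of the passage is to extract the factor $\pr(\tau_z=\infty)$ cleanly.
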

\begin{proof}
Set $m=\left[\frac{n}{2}\right]$. By the Markov property,
\begin{equation}\label{split.sum}
\begin{split}
\pr(S_n&=x, \tau_z>n)=\\
&=\sum_{y=1-z}^\infty\pr\left(S_m=y,\tau_z>m\right)\pr\left(S_{n-m}=x-y,\min_{k\leq n-m}S_k\geq -y-z\right).
\end{split}
\end{equation}
We split the sum in \eqref{split.sum} into two parts.
First,
\begin{align}\label{sum.1}
\nonumber
\sum_{y=1-z}^{am/2}\pr(S_m=y,\tau_z>m)&\pr(S_{n-m}=x-y,\min_{k\leq x-y}S_k\geq-y-z)\\
&\leq \sup_u\pr(S_{n-m}=u)\pr\left(S_m\leq\frac{am}{2},\tau_{z}>m\right).
\end{align}
Using the Chebyshev inequality we infer that
\begin{equation}\label{max.cheb}
\pr\left(S_m\leq\frac{am}{2},\tau_z>m\right)
\le \pr(S_m\leq\frac{am}{2})\rightarrow 0,\hspace{0,5cm}\text{as }m\rightarrow\infty.
\end{equation}
Furthermore, by the local limit theorem for unconditioned random walks, 
\begin{equation}
\label{concentration}
\sup_z\pr(S_{n-m}=z)\leq\frac{c}{\sqrt{n-m}}.
\end{equation}
Plugging this estimate into \eqref{sum.1} and using \eqref{max.cheb}, we get
\begin{equation}\label{sum.1*}
\sum_{y=1-z}^{am/2}\pr(S_m=y,\tau_z>m)\pr(S_{n-m}=x-y,\min_{k\leq n-m}S_k\geq-y-z)=o\left(\frac{1}{\sqrt{n}}\right).
\end{equation}
Second, for $y>am/2$ we shall use the representation
\begin{align*}
&\pr\left(S_{n-m}=x-y,\min_{k\leq n-m} S_k\geq -y-z\right)\\
&\hspace{1cm}=\pr\left(S_{n-m}=x-y\right)-\pr\left(S_{n-m}=x-y,\min_{k\leq x-y} S_k<-y-z\right).
\end{align*}
Therefore,
\begin{equation*}
\pr\left(S_{n-m}=x-y,\min_{k\leq n-m} S_k\geq-y-z\right)\leq\pr(S_{n-m}=x-y),
\end{equation*}
and
\begin{equation*}
\begin{split}
\pr\left(S_{n-m}=x-y,\min_{k\leq n-m} S_k\geq-y-z\right)&\\
\geq \pr\left(S_{n-m}=x-y\right)
-\pr&\left(\min_{k\leq n-m} S_k<-\frac{am}{2}\right).
\end{split}
\end{equation*}
Applying the classical Kolmogorov inequality, we get
\begin{align}\label{max.doob}
\nonumber
\pr\left(\min_{k\leq n-m} S_k<-\frac{am}{2}\right)
&\leq\pr\left(\min_{k\leq n-m}(S_k-\e[S_k])<-\frac{am}{2}\right)\\
&\leq\frac{\mathbf{Var}[S_{n-m}]}{a^2m^2/4}\leq\frac{c}{n}.
\end{align}
Consequently,
\begin{equation}\label{sum.2}
\begin{split}
\sum_{y\geq\frac{am}{2}}\pr(S_m=y,\tau_z>m)\pr\left(S_{n-m}=x-y,\min_{k\leq n-m}S_k>-y-z\right)=\\
=\sum_{y\geq\frac{am}{2}}\pr(S_m=y,\tau_z>m)\pr(S_{n-m}=x-y)+O\left(\frac{1}{n}\right).
\end{split}
\end{equation}
Combining \eqref{sum.1*} and \eqref{sum.2} we get
\begin{equation*}
\pr(S_n=x,\tau_z>n)=\sum_{y=1}^\infty\pr(S_m=y,\tau_z>m)\pr(S_{n-m}=x-y)+o\left(\frac{1}{\sqrt{n}}\right).
\end{equation*}
By the local limit theorem for $\{S_n\}$,
\begin{equation*}
\pr(S_{n-m}=x-y)=\frac{1}{\sqrt{2\pi(n-m)\sigma^2}}e^{-(x-y-a(n-m))^2/2\sigma^2(n-m)}+o\left(\frac{1}{\sqrt{n}}\right)
\end{equation*}
uniformly in $x-y$.

Therefore,
\begin{equation}\label{sum.3}
\begin{split}
\pr(S_n&=x,\tau_z>n)\\
&=\frac{\pr(\tau_z>m)}{\sqrt{2\pi(n-m)\sigma^2}}\e\left[e^{-(x-S_m-an+am)^2/2\sigma^2(n-m)}\vert \tau_z>m\right]+o\left(\frac{1}{\sqrt{n}}\right).
\end{split}
\end{equation}
Using now Proposition~\ref{genIgl}, we get
\begin{align}
\label{propIgl}
\nonumber
&\e\left[e^{-(x-an-(S_m-am))^2/2(n-m)\sigma^2}\vert\tau_z>m\right]\\
\nonumber
&\hspace{2cm}=\int_{-\infty}^\infty
e^{-\left(\frac{x-an}{\sqrt{(n-m)\sigma^2}}-u\right)^2/2}\frac{1}{\sqrt{2\pi}}
e^{-u^2/2}+o(1)\\
&\hspace{2cm}=\frac{1}{\sqrt{2}}e^{-(x-an)^2/2n\sigma^2}+o(1).
\end{align}
Plugging \eqref{propIgl} into \eqref{sum.3}, we obtain
\begin{equation*}
\pr(S_n=x,\tau_z>n)=\frac{\pr(\tau_z>m)}{\sqrt{2\pi\sigma^2n}}e^{-(x-an)^2/2n\sigma^2}+o\left(\frac{1}{\sqrt{n}}\right).
\end{equation*}
Recalling that $\pr(\tau_z>m)\rightarrow\pr(\tau_z=\infty)$, we finally get the relation \eqref{main}. 
\end{proof}
\subsection{Local asymptotics for $(M_n,S_n)$}
Define 
$$
\tau_+:=\min\{n\geq1:S_n>0\}.
$$
\begin{theorem}\label{T.1.1}
Under the assumptions of Theorem \ref{T.1}, uniformly in $x\ge0$ and $r\ge0$,
\begin{equation*}
\pr(M_n=x,S_n=x-r)=
\frac{\mathbf{P}(\tau_+=\infty)V(r)}{\sqrt{2\pi\sigma^2n}}e^{-(x-na)^2/2\sigma^2n}+o\left(\frac{1}{\sqrt{n}}\right),
\end{equation*}
where 
$$
V(r):=\sum_{j=0}^\infty\pr(S_j=-r,\tau_+>j).
$$
\end{theorem}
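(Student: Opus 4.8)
The plan is to reduce the statement to Theorem~\ref{T.1} by conditioning on the \emph{first} time at which $S$ attains its running maximum. Set $g:=\min\{k\le n:S_k=M_n\}$. For $x\ge1$, on $\{M_n=x,\,S_n=x-r\}$ one has $S_k\le x-1$ for $0\le k<g$, $S_g=x$, and $S_k\le x$ for $g<k\le n$; since $x$ is the running maximum, it is reached without overshoot. Conditioning on $S_g=x$ and using that the increments are i.i.d.\ factorises the probability of each such event (indexed by $g$) into a pre- and a post-maximum factor. Reversing time on $[0,g]$, i.e.\ replacing $(X_1,\dots,X_g)$ by $(X_g,\dots,X_1)$ (which leaves the law unchanged), turns $\pr(S_g=x,\,S_k\le x-1\text{ for }0\le k<g)$ into $\pr(S_g=x,\,\tau_0>g)$, where $\tau_0:=\inf\{n\ge1:S_n\le0\}$ is $\tau_z$ with $z=0$; and the post-maximum factor, after subtracting $x$, equals $\pr(S_{n-g}=-r,\,\tau_+>n-g)$, since $\{\tau_+>\ell\}=\{S_1\le0,\dots,S_\ell\le0\}$. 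Summing over $g$ and writing $m=n-g$,
\begin{equation}\label{eq:sketch.decomp}
\pr(M_n=x,S_n=x-r)=\sum_{m=0}^{n-1}\pr(S_{n-m}=x,\tau_0>n-m)\,\pr(S_m=-r,\tau_+>m)\qquad(x\ge1),
\end{equation}
while $x=0$ forces $g=0$, so the left-hand side is $\pr(S_n=-r,\tau_+>n)\le\pr(\tau_+>n)=O(1/n)$, which is consistent with the (then exponentially small) right-hand side of the asserted formula.

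Granting \eqref{eq:sketch.decomp}, I would argue as in the earlier proofs: fix $L\in\mathbb N$, split the sum into the ranges $m\le L$, $L<m\le n/2$ and $n/2<m\le n$, let $n\to\infty$, and then $L\to\infty$. On the main range $m\le L$, Theorem~\ref{T.1} with $z=0$ gives, for each fixed $m$ and uniformly in $x>0$,
$$
\pr(S_{n-m}=x,\tau_0>n-m)=\frac{\pr(\tau_0=\infty)}{\sqrt{2\pi\sigma^2(n-m)}}\,e^{-(x-(n-m)a)^2/2\sigma^2(n-m)}+o(1/\sqrt n);
$$
since $m$ is fixed, a routine estimate (the two Gaussians differ negligibly on the scale $\sqrt n$) lets one replace $n-m$ by $n$ in the exponent and the prefactor, at the price of a further $o(1/\sqrt n)$ uniform in $x\ge0$. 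Summing the finitely many terms with $m\le L$, this range contributes
$$
\frac{\pr(\tau_0=\infty)}{\sqrt{2\pi\sigma^2 n}}\,e^{-(x-na)^2/2\sigma^2 n}\sum_{m=0}^{L}\pr(S_m=-r,\tau_+>m)+o(1/\sqrt n),
$$
and $\sum_{m=0}^{L}\pr(S_m=-r,\tau_+>m)\uparrow V(r)$ as $L\to\infty$, where $V(r)\le\sum_{j\ge0}\pr(S_j=-r)<\infty$ is bounded uniformly in $r$ because a transient random walk visits each site a number of times stochastically dominated by a geometric random variable with a fixed parameter.

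It remains to check that the two other ranges are $o(1/\sqrt n)$ uniformly in $x$ and $r$. For $L<m\le n/2$ we have $n-m\ge n/2$, so by the local limit theorem for $\{S_n\}$ (here aperiodicity enters) $\pr(S_{n-m}=x,\tau_0>n-m)\le\sup_y\pr(S_{n-m}=y)\le c/\sqrt{n-m}\le c\sqrt2/\sqrt n$; hence $\sqrt n$ times this range is at most $c\sqrt2\sum_{m>L}\pr(\tau_+>m)$, which tends to $0$ as $L\to\infty$ since $\sum_m\pr(\tau_+>m)=\e\tau_+<\infty$ for a walk with positive drift and finite variance. For $n/2<m\le n$, Chebyshev's inequality gives $\pr(S_m=-r,\tau_+>m)\le\pr(\tau_+>m)\le C/n$, while $\sum_{n/2<m\le n}\pr(S_{n-m}=x,\tau_0>n-m)\le\sum_{j\ge0}\pr(S_j=x)$ is bounded uniformly in $x$ by transience; so this range is $O(1/n)$. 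Collecting the three contributions, letting $n\to\infty$ and then $L\to\infty$, and using $V(r)-\sum_{m=0}^L\pr(S_m=-r,\tau_+>m)=\sum_{m>L}\pr(S_m=-r,\tau_+>m)\le\sum_{m>L}\pr(\tau_+>m)\to0$ uniformly in $r$ to pass from the partial sum to $V(r)$, one obtains the claimed uniform asymptotics.

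The step I expect to be the main obstacle is the decomposition \eqref{eq:sketch.decomp}: one must condition on the \emph{first} rather than the last maximum epoch, so that the time-reversed pre-maximum path is a walk conditioned to stay strictly positive — exactly the object of Theorem~\ref{T.1}, giving the constant $\pr(\tau_0=\infty)$ — while the post-maximum path is a walk conditioned to stay non-positive, giving the renewal-type series $V(r)$. The other delicate point is the range of $m$ close to $n$ (equivalently $g$ close to $0$), where Theorem~\ref{T.1} does not apply and one must instead exploit that a positive-drift walk is very unlikely to stay below $0$ for a long time, together with the uniform boundedness of the Green function $\sum_{j\ge0}\pr(S_j=x)$.
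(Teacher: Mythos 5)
Your proposal is correct and follows essentially the same route as the paper: you decompose on the first epoch $g=\theta_n$ at which the running maximum is attained, time-reverse the pre-maximum segment to recognise $\pr(S_g=x,\tau_0>g)$, identify the post-maximum segment as $\pr(S_{n-g}=-r,\tau_+>n-g)$, and then split the resulting convolution-type sum into a finite main range (where Theorem~\ref{T.1} with $z=0$ applies), a middle range controlled by the concentration bound $\sup_y\pr(S_j=y)\le c/\sqrt j$ and the tail of $\sum\pr(\tau_+>m)$, and a range near $g=0$ controlled by Chebyshev for $\pr(\tau_+>m)$ together with the boundedness of the Green function. This is exactly the paper's decomposition \eqref{max.2} (the paper indexes by $k=n-m$ and invokes the duality lemma in place of your explicit time reversal, and it uses a fixed $\varepsilon\in(0,1/2)$ rather than $\varepsilon=1/2$, but these are cosmetic differences).
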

\begin{proof}
Let $\theta_n$ be the first time the random walk achieves its maximum. That is,
$$
\theta_n:=\min\lbrace k\geq 1: S_k=M_n\rbrace.
$$
By the Markov property,
\begin{align}
\label{max.1}
\nonumber
&\pr(M_n=x, S_n=x-r)\\
\nonumber
&\hspace{0.5cm}=\sum_{k=0}^n\pr(M_n=x,S_n=x-r,\theta_n=k)\\
\nonumber
&\hspace{0.5cm}=\sum_{k=0}^n \pr(S_k=x, S_j<S_k\ \text{for all }j<k)
\pr(S_{n-k}=-r, S_j\leq 0\ \text{for all }j\leq n-k)\\
&\hspace{0.5cm}=\sum_{k=0}^n\pr(S_k=x, S_k-S_j>0\ \text{for all }j<k)
\pr(S_{n-k}=-r,\tau_+>n-k).
\end{align}
It follows from the duality lemma for random walks that
\begin{equation}
\label{duallem}
\pr(S_k=x, S_k-S_j>0\ \text{for all }j<k)=\pr(S_k=x, \tau>k).
\end{equation}
Combining \eqref{max.1} and \eqref{duallem}, we obtain
\begin{equation}\label{max.2}
\pr(M_n=x, S_n=x-r)=\sum_{k=0}^n\pr(S_k=x, \tau>k)\pr(S_{n-k}=-r,\tau_+>n-k).
\end{equation}
Fix some $\varepsilon\in(0,1/2)$.
The assumption $a>0$ implies that $\mathbf{E}\tau_+$ is finite. Therefore, uniformly in $k\leq(1-\varepsilon)n$ and $r\ge0$,
\begin{align*}
\pr(S_{n-k}=-r, \tau_+>n-k)\leq\pr(\tau_+>\varepsilon n)=o\left(\frac{1}{n}\right).
\end{align*}
Consequently, using the transience of $\lbrace S_n\rbrace$,
\begin{align}
\label{max.3}
\nonumber
&\sum_{k=0}^{(1-\varepsilon)n}\pr(S_k=x,\tau>k)\pr(S_{n-k}=-r,\tau_+>n-k)\\
&\hspace{3cm}\le \pr(\tau_+>\varepsilon n)\sum_{k=0}^\infty\pr(S_k=x)
=o\left(\frac{1}{n}\right).
\end{align}
If $k\ge(1-\varepsilon)n$ then, by \eqref{concentration},
\begin{equation}
\label{max3.1}
\pr(S_k=x,\tau>k)\leq\pr(S_k=x)\leq\frac{c}{\sqrt{(1-\varepsilon)n}}\leq\frac{2c}{\sqrt{n}}.
\end{equation}
This bound implies that, for every fixed $N$,
\begin{align}
\label{max.4}
\nonumber
&\sum_{(1-\varepsilon)n}^{n-N}\pr(S_k=x,\tau>k)\pr(S_{n-k}=-r,\tau_+>n-k)\\
&\hspace{1cm}\leq \frac{c}{\sqrt{n}}\sum_{j=N}^{\varepsilon n}\pr(S_j=-r,\tau_+>j)
\leq\frac{c}{\sqrt{n}}\sum_{j=N}^\infty\pr(\tau_+>j).
\end{align}
We also note that the finiteness of $\e\tau_+$ yields
\begin{equation}
\label{eps_N}
\lim_{N\to\infty}\sum_{j=N}^\infty\pr(\tau_+>j)=0.
\end{equation}
Using Theorem \ref{T.1} with $z=0$, we obtain
\begin{align}
\label{max.5}\nonumber
&\sum_{k=n-N}^n\pr(S_k=x,\tau>k)\pr(S_{n-k}=-r,\tau_+>n-k)\\
&\hspace{1cm}=\frac{\mathbf{P}(\tau=\infty)}{\sqrt{2\pi\sigma^2n}}e^{-(x-na)^2/2\sigma^2n}
\sum_{j=0}^N\pr(S_j=-r,\tau_+>j)+o\left(\frac{1}{\sqrt{n}}\right).
\end{align}
Plugging \eqref{max.3}, \eqref{max.4} and \eqref{max.5} into \eqref{max.2}, we obtain
\begin{equation}\label{max.6}
\begin{split}
&\left|\pr(M_n=x,S_n=x-r)-\frac{\pr(\tau=\infty)}{\sqrt{2\pi\sigma^2n}}e^{(x-na)^2/2\sigma n}
\sum_{j=0}^N\pr(S_j=-r,\tau_+>j)\right|\\
&\hspace{3cm}\le\sum_{j=N}^\infty\pr(\tau_+>j)+o\left(\frac{1}{\sqrt{n}}\right).
\end{split}
\end{equation}
Letting now $N\rightarrow\infty$ and taking into account \eqref{eps_N}, we arrive at \eqref{max.6}. 
\end{proof}
\begin{corollary}
\label{prop:Mn}
Under the conditions of Theorem~\ref{T.1},
$$
\mathbf{P}(M_n=x)=\frac{1}{\sqrt{2\pi\sigma^2n}}e^{-(x-an)^2/2n}
+o\left(\frac{1}{\sqrt{n}}\right)
$$
uniformly in $x\ge0$.
\end{corollary}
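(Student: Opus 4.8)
The plan is to recover $\mathbf{P}(M_n=x)$ from the bivariate local asymptotics of Theorem~\ref{T.1.1} by summing over the overshoot $r$:
$$
\mathbf{P}(M_n=x)=\sum_{r=0}^\infty\mathbf{P}(M_n=x,S_n=x-r).
$$
Heuristically Theorem~\ref{T.1.1} suggests the answer $\frac{\mathbf{P}(\tau=\infty)}{\sqrt{2\pi\sigma^2n}}e^{-(x-na)^2/2\sigma^2n}\sum_{r\ge0}V(r)$, and two identities determine the constant. First, interchanging the order of summation (all terms being nonnegative) and using $\{\tau_+>j\}\subseteq\{S_j\le0\}$,
$$
\sum_{r=0}^\infty V(r)=\sum_{j=0}^\infty\sum_{r=0}^\infty\mathbf{P}(S_j=-r,\tau_+>j)=\sum_{j=0}^\infty\mathbf{P}(\tau_+>j)=\mathbf{E}\tau_+,
$$
which is finite because $a>0$. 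Second, the Wiener--Hopf (Sparre Andersen) factorization, evaluated as $s\uparrow1$, gives the classical identity $\mathbf{P}(\tau=\infty)\,\mathbf{E}\tau_+=1$. Hence the candidate main term is exactly $(2\pi\sigma^2n)^{-1/2}e^{-(x-na)^2/2\sigma^2n}$, the asserted limit.

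The one real difficulty is that the $o(1/\sqrt n)$ in Theorem~\ref{T.1.1} is not summable in $r$, so the series must be cut at a fixed level $R$. For the finitely many terms $r\le R$ Theorem~\ref{T.1.1} applies verbatim and, $R$ being fixed, the accumulated error remains $o(1/\sqrt n)$ uniformly in $x$, contributing $\frac{\mathbf{P}(\tau=\infty)}{\sqrt{2\pi\sigma^2n}}e^{-(x-na)^2/2\sigma^2n}\sum_{r=0}^RV(r)$. For the tail $r>R$ I would not use Theorem~\ref{T.1.1} but return to the exact identity~\eqref{max.2}, write
$$
\sum_{r>R}\mathbf{P}(M_n=x,S_n=x-r)=\sum_{k=0}^n\mathbf{P}(S_k=x,\tau>k)\sum_{r>R}\mathbf{P}(S_{n-k}=-r,\tau_+>n-k),
$$
fix $\varepsilon\in(0,1/2)$, and estimate it exactly as in the proof of Theorem~\ref{T.1.1}: for $k\le(1-\varepsilon)n$ use $\sum_{r>R}\mathbf{P}(S_{n-k}=-r,\tau_+>n-k)\le\mathbf{P}(\tau_+>\varepsilon n)=o(1/n)$ together with transience $\sum_k\mathbf{P}(S_k=x)<\infty$; for $k>(1-\varepsilon)n$ use the concentration bound~\eqref{concentration}, $\mathbf{P}(S_k=x)\le c/\sqrt n$, and the identity $\sum_{j\ge0}\sum_{r>R}\mathbf{P}(S_j=-r,\tau_+>j)=\sum_{r>R}V(r)$ (substituting $j=n-k$). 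This yields
$$
\sum_{r>R}\mathbf{P}(M_n=x,S_n=x-r)\le\frac{c}{\sqrt n}\sum_{r>R}V(r)+o\Big(\tfrac1{\sqrt n}\Big)
$$
uniformly in $x\ge0$.

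Combining the two parts, multiplying by $\sqrt n$, and using $e^{-(x-na)^2/2\sigma^2n}\le1$ together with $\big|\mathbf{P}(\tau=\infty)\sum_{r=0}^RV(r)-1\big|\le\mathbf{P}(\tau=\infty)\sum_{r>R}V(r)$, one obtains
$$
\Big|\sqrt n\,\mathbf{P}(M_n=x)-\tfrac{1}{\sqrt{2\pi\sigma^2}}e^{-(x-na)^2/2\sigma^2n}\Big|\le c'\sum_{r>R}V(r)+o(1)
$$
uniformly in $x\ge0$. Letting $n\to\infty$ and then $R\to\infty$, so that $\sum_{r>R}V(r)\to0$, completes the argument. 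The main obstacle is precisely this uniform control of the tail of the $r$-series; once~\eqref{max.2} is invoked there, the whole estimate reduces to ingredients already present in the proof of Theorem~\ref{T.1.1}, and the only additional input is the renewal identity $\mathbf{P}(\tau=\infty)\mathbf{E}\tau_+=1$ used to identify the constant.
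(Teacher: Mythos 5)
Your proof is correct but takes a genuinely different route from the paper. The paper goes directly to the one-dimensional decomposition $\mathbf{P}(M_n=x)=\sum_{k=0}^n\mathbf{P}(S_k=x,\tau>k)\mathbf{P}(\tau_+>n-k)$ (its \eqref{max.7}), splits the $k$-sum into three ranges $k\le(1-\varepsilon)n$, $(1-\varepsilon)n\le k\le n-N$, $n-N\le k\le n$, and applies only the scalar local limit theorem (Theorem~\ref{T.1}); you instead sum the bivariate Theorem~\ref{T.1.1} over the overshoot $r$. Because the $o(1/\sqrt n)$ in Theorem~\ref{T.1.1} is uniform in $r$ but not summable, your truncation at a fixed level $R$ followed by a return to the exact identity \eqref{max.2} for the tail $r>R$ is exactly the right move; there you split only in two regimes $k\lessgtr(1-\varepsilon)n$ and reuse \eqref{concentration}, the finiteness of $\mathbf{E}\tau_+$, and transience of $\{S_k\}$, arriving at $\frac{c}{\sqrt n}\sum_{r>R}V(r)+o(1/\sqrt n)$ uniformly in $x\ge0$, which vanishes as $R\to\infty$ since $\sum_rV(r)=\mathbf{E}\tau_+<\infty$. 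The identification of the constant is the same in both proofs and rests on $\mathbf{P}(\tau=\infty)\mathbf{E}\tau_+=1$, the paper's \eqref{pE}, combined in your case with $\sum_rV(r)=\mathbf{E}\tau_+$. Net effect: you treat Theorem~\ref{T.1.1} as a black box for the main term and only replay a light version of the $k$-split for the $r$-tail, whereas the paper redoes the full $k$-analysis from Theorem~\ref{T.1}; neither is substantially shorter, but yours is the more natural thing to attempt once Theorem~\ref{T.1.1} is available. One small remark: the prefactor in the printed statement of Theorem~\ref{T.1.1} reads $\mathbf{P}(\tau_+=\infty)$, which vanishes for positive drift; its proof (see \eqref{max.6}) shows it should be $\mathbf{P}(\tau=\infty)$, and you have correctly used the latter.
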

\begin{proof}
Similar to the proof of Theorem~\ref{T.1.1},
\begin{align}
\label{max.7}
\nonumber
\pr(M_n=x)&=\sum_{k=0}^n\pr(M_n=x,\theta_n=k)\\
\nonumber
&=\sum_{k=0}^n\pr(S_k=x, S_k-S_j>0\text{ for all }j<k)\pr(\tau_+>n-k)\\
&=\sum_{k=0}^n\pr(S_k=x,\tau>k)\pr(\tau_+>n-k).
\end{align}
The bound
\begin{equation*}
\pr(\tau_+>\varepsilon n)=o\left(\frac{1}{n}\right),
\end{equation*}
implies that
\begin{align}
\label{max.8}
\nonumber
&\sum_{k=0}^{(1-\varepsilon)n}\pr(S_k=x,\tau>k)\pr(\tau_+>n-k)\\
&\hspace{2cm}\le\pr(\tau_+>\varepsilon n)\sum_{k=0}^\infty\pr(S_k=x)=o\left(\frac{1}{n}\right),
\end{align}
in the last step we have also used the fact that $\lbrace S_k\rbrace$ is transient.
Using the same arguments as in the derivation of \eqref{max3.1}, 
\begin{equation}\label{max.9}
\sum_{(1-\varepsilon)}^{n-N}\pr(S_k=x, \tau>k)\pr(\tau_+>n-k)
\leq\frac{c}{\sqrt{n}}\sum_{j=N}^\infty\pr(\tau_+>j)=\frac{\varepsilon_N}{\sqrt{n}}.
\end{equation}
Finally, using \eqref{main}, we obtain
\begin{align}
\label{max.10}
\nonumber
&\sum_{k=n-N}^n\pr(S_k=x,\tau>k)\pr(\tau_+>n-k)\\
&\hspace{2cm}=\frac{\mathbf{P}(\tau=\infty)}{\sqrt{2\pi\sigma^2n}}e^{-(x-na)^2/2\sigma^2n}\sum_{j=0}^N\pr(\tau_+>j).
\end{align}
Combining \eqref{max.7},\eqref{max.8}, \eqref{max.9} and \eqref{max.10}
and letting $N\rightarrow\infty$, we conclude that
\begin{equation*}
\pr(M_n=x)=\frac{\mathbf{P}(\tau=\infty)\e\tau_+}{\sqrt{2\pi\sigma^2n}}e^{-(x-na)^2/2\sigma^2n}
+\left(\frac{1}{\sqrt{n}}\right).
\end{equation*}
Noting that the duality of stopping times $\tau$ and $\tau_+$ implies that
$$
\left(1-\mathbf{E}z^{\tau_+}\right)\left(1-\mathbf{E}z^{\tau}\right)=1-z.
$$
Dividing both parts by $1-z$ and letting $z\to1$, we get
\begin{equation}
\label{pE}
\mathbf{P}(\tau=\infty)\e\tau_+=1.
\end{equation}
This equality completes the proof.
\end{proof}
\begin{corollary}\label{cor:Mn-Sn}
If $x-na=O(\sqrt{n})$, then for every fixed $r\ge0$
\begin{equation}\label{Cor.1}
\lim_{n\to\infty}\pr(S_n=x-r\vert M_n=x)=\mathbf{P}(\tau_z=\infty)V(r)=\frac{V(r)}{\e\tau_+}.
\end{equation}
\end{corollary}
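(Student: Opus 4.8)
The plan is to read off the conclusion by dividing the two-point local asymptotics of Theorem~\ref{T.1.1} by the one-point local asymptotics of Corollary~\ref{prop:Mn}. The only point that needs attention is that both of those expansions carry an additive remainder of order $o(1/\sqrt n)$, and these are harmless here precisely because the hypothesis $x-na=O(\sqrt n)$ forces the common Gaussian factor to stay bounded away from $0$, so that the main terms are of exact order $1/\sqrt n$.

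First I would fix a constant $K$ with $|x-na|\le K\sqrt n$ (this is what the $O$-assumption means, $x=x_n$ being tacitly a sequence), and set $g_n:=\exp\{-(x-na)^2/2\sigma^2n\}$, so that $g_n\in[\delta,1]$ for all $n$, where $\delta:=e^{-K^2/2\sigma^2}>0$. Corollary~\ref{prop:Mn} then gives $\pr(M_n=x)=g_n/\sqrt{2\pi\sigma^2n}+o(1/\sqrt n)\ge\delta/(2\sqrt{2\pi\sigma^2n})>0$ for all large $n$, so the conditional probability $\pr(S_n=x-r\mid M_n=x)=\pr(M_n=x,S_n=x-r)/\pr(M_n=x)$ is well defined. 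Inserting into the numerator the expansion of Theorem~\ref{T.1.1} and into the denominator that of Corollary~\ref{prop:Mn}, and then multiplying numerator and denominator by $\sqrt{2\pi\sigma^2n}$, one obtains
\[
\pr(S_n=x-r\mid M_n=x)=\frac{\pr(\tau=\infty)\,V(r)\,g_n+o(1)}{g_n+o(1)}.
\]
Since $g_n+o(1)\ge\delta/2$ for all large $n$, subtracting the constant $\pr(\tau=\infty)V(r)$ from the right-hand side leaves a quantity equal to $(1-\pr(\tau=\infty)V(r))\,o(1)/(g_n+o(1))=o(1)$; hence $\pr(S_n=x-r\mid M_n=x)\to\pr(\tau=\infty)V(r)$ as $n\to\infty$. (The same estimate also yields uniformity over all sequences $x_n$ with $|x_n-na|\le K\sqrt n$, for fixed $K$.)

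Finally, to put the constant in the asserted form I would invoke the duality identity \eqref{pE}, namely $\pr(\tau=\infty)\,\e\tau_+=1$, which rewrites the limit as $V(r)/\e\tau_+$. There is no genuine obstacle in this argument: it is a short deduction from Theorem~\ref{T.1.1}, Corollary~\ref{prop:Mn} and \eqref{pE}, the only delicate step being the use of the hypothesis $x-na=O(\sqrt n)$ to keep the $o(1/\sqrt n)$ errors of strictly smaller order than the main terms — without it the main term $g_n/\sqrt{2\pi\sigma^2n}$ could be comparable to, or smaller than, the remainder, and the ratio need not converge.
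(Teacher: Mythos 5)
Your argument is exactly the paper's: divide the two-point asymptotics of Theorem~\ref{T.1.1} by the one-point asymptotics of Corollary~\ref{prop:Mn} and invoke the duality identity \eqref{pE} to rewrite the constant. You have merely spelled out the (correct) reason why the $o(1/\sqrt n)$ remainders are harmless under $x-na=O(\sqrt n)$, which the paper leaves implicit.
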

\begin{proof}
Combining Theorem~\ref{T.1.1} and Corollary~\ref{prop:Mn} and using \eqref{pE}, we obtain the desired relation.
\end{proof}
Consider the sequence 
$$
R_n:=M_n-S_n, \quad n\ge0.
$$
It is well-known and easy to see that this sequence can be defined by 
$$
R_{n+1}=(R_n-X_{n+1})^+.
$$
Furthermore, for every $n$ the distribution of $R_n$ is equal to that of $\max_{k\le n}(-S_k)$.
The assumption $a>0$ implies now that, as $n\to\infty$,
$$
\mathbf{P}(M_n-S_n=r)=\mathbf{P}(R_n=r)\to\mathbf{P}\left(\max_{k\ge1}(-S_k)=r\right),\quad r\ge0.
$$
By the ladder heights representation for $\max_{k\ge1}(-S_k)$ and by the duality lemma,
\begin{align*}
\mathbf{P}\left(\max_{k\ge1}(-S_k)=r\right)
&=\sum_{n=0}^\infty\pr(S_n=-r,\, n\text{ is a descending ladder epoch })\pr(\tau=\infty)\\
&=\sum_{j=0}^\infty\pr(S_j=-r,\tau_+>j)\pr(\tau=\infty)
=\frac{V(r)}{\e\tau_+}.
\end{align*}
As a result, as $n\to\infty$,
$$
\pr(M_n-S_n=r)\to\frac{V(r)}{\e\tau_+}
$$
for every $r\ge0$. Obviously, this classical  relation is a consequence of our Corollary~\ref{cor:Mn-Sn}. 

\begin{theorem}\label{snmax}
Under the conditions of Theorem~\ref{T.1}, for fixed non-negative numbers $y,z$,
\begin{align*}
&\pr(S_n=x, M_{n-1}< x+y,\tau_z>n)\\
&\hspace{1cm}=\frac{\pr(\tau_y=\infty)\pr(\tau_z=\infty)}{\sqrt{2\pi\sigma^2n}}e^{-(x-na)^2/2\sigma^2n}
+o\left(\frac{1}{\sqrt{n+x}}\right)
\end{align*}
uniformly in $x$.
\end{theorem}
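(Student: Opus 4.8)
The plan is to repeat the proof of Theorem~\ref{T.1}, splitting the trajectory at $m=\lfloor n/2\rfloor$, with two changes: the extra ``soft maximum'' constraint $\{M_{n-1}<x+y\}$ has to be carried along, and in the second half it gets converted, by time reversal, into a staying-positive constraint to which Theorem~\ref{T.1} directly applies.

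First I would dispose of $x$ outside the Gaussian window. Fix $\omega_n\to\infty$ arbitrarily slowly. For $|x-na|>\omega_n\sqrt n$ the claimed main term is $o(1/\sqrt{n+x})$, and so is $\pr(S_n=x,M_{n-1}<x+y,\tau_z>n)\le\pr(S_n=x)$: in the bounded-ratio range this follows from the local limit theorem for $\{S_n\}$ (the Gaussian being $o(1)$ there), and for $x\ge 2na$ from $\pr(S_n\ge x)\le 4n\sigma^2/x^2$, which is $o(1/\sqrt x)$ uniformly. So fix $x$ with $|x-na|\le\omega_n\sqrt n$, and apply the Markov property at time $m$:
\begin{align*}
&\pr(S_n=x,M_{n-1}<x+y,\tau_z>n)\\
&\quad=\sum_v\pr(S_m=v,M_m<x+y,\tau_z>m)\,\pr(S_{n-m}=x-v,M_{n-m-1}<x+y-v,\tau_{z+v}>n-m).
\end{align*}
Bounding the second factor by $\pr(S_{n-m}=x-v)\le c/\sqrt{n-m}$ via \eqref{concentration} and using $\pr(|S_m-ma|>\omega_n\sqrt n)=o(1)$, the range $|v-ma|>\omega_n\sqrt n$ contributes $o(1/\sqrt n)$; in the surviving range $v\ge ma/2$, $x+y-v\ge ma/2$ (hence $x-v>-y$) for all large $n$. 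Kolmogorov's inequality for the martingales $S_k-ka$ and $ka-S_k$ then gives $\pr(M_m\ge x+y)=O(1/n)$ (as $M_m\ge x+y$ forces $\max_{k\le m}(S_k-ka)\ge ma/2$) and $\sup_{v\ge ma/2}\pr(\tau_{z+v}\le n-m)=O(1/n)$; multiplying by $c/\sqrt{n-m}$ and summing shows that one may drop $\{M_m<x+y\}$ from the first factor and $\{\tau_{z+v}>n-m\}$ from the second, at total cost $o(1/\sqrt n)$.

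Now I would time-reverse the second half: with $\hat S_i:=(x-v)-S_{(n-m)-i}$, an i.i.d.\ walk with the same increment law, one has $\{S_{n-m}=x-v,\ M_{n-m-1}<x+y-v\}=\{\hat S_{n-m}=x-v,\ \hat\tau_y>n-m\}$ (using $x-v>-y$ in the surviving range), where $\hat\tau_y:=\inf\{i\ge1:y+\hat S_i\le0\}$. Since $\pr(\hat\tau_y=\infty)=\pr(\tau_y=\infty)$, Theorem~\ref{T.1} applied to $\hat S$ with starting level $y$ replaces the second factor by $\frac{\pr(\tau_y=\infty)}{\sqrt{2\pi\sigma^2(n-m)}}e^{-((x-v)-(n-m)a)^2/2\sigma^2(n-m)}+o(1/\sqrt n)$, uniformly in $v$. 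Reassembling the sum, and removing the truncation $|S_m-ma|\le\omega_n\sqrt n$ at cost $o(1/\sqrt n)$, leaves
$$
\frac{\pr(\tau_y=\infty)}{\sqrt{2\pi\sigma^2(n-m)}}\,\e\!\left[e^{-((x-S_m)-(n-m)a)^2/2\sigma^2(n-m)};\,\tau_z>m\right]+o\!\left(\frac{1}{\sqrt n}\right),
$$
which is $\pr(\tau_y=\infty)$ times the quantity already analysed in the proof of Theorem~\ref{T.1}: by Proposition~\ref{genIgl} (cf.\ \eqref{propIgl}) the expectation equals $\frac{\pr(\tau_z>m)}{\sqrt2}\,e^{-(x-na)^2/2n\sigma^2}+o(1)$ uniformly in $x$, and then $\pr(\tau_z>m)\to\pr(\tau_z=\infty)$ together with $\sqrt{2(n-m)}=\sqrt n\,(1+o(1))$ yields the asserted expression $\frac{\pr(\tau_y=\infty)\pr(\tau_z=\infty)}{\sqrt{2\pi\sigma^2 n}}e^{-(x-na)^2/2\sigma^2 n}+o(1/\sqrt n)$.

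The main obstacle is the bookkeeping of uniformity. The two soft constraints can only be discarded \emph{after} the truncation $|v-ma|\le\omega_n\sqrt n$, which is what forces the thresholds $z+v$ and $x+y-v$ to be of order $n$ and hence the maximal-inequality estimates to be $O(1/n)$; the wrong ordering of the truncation, the dropping of constraints, and the application of Theorem~\ref{T.1} produces an error of order $1$ instead of $o(1/\sqrt n)$. A secondary nuisance is that the statement requires $o(1/\sqrt{n+x})$ rather than $o(1/\sqrt n)$, which is exactly why the crude Chebyshev bound on $\pr(S_n\ge x)$ is needed to handle $x\gg n$.
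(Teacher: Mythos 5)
Your proof is correct and follows essentially the same strategy as the paper's: split at $m=\lfloor n/2\rfloor$ by the Markov property, truncate the range of $S_m$ so that Kolmogorov's inequality shows the two ``soft'' constraints (bounding $M_m$ and the post-$m$ minimum) can be dropped at cost $O(1/n)$, convert the remaining maximum constraint on the second half into a staying-positive constraint by time reversal (the duality lemma), and then apply Theorem~\ref{T.1}. The only difference is cosmetic bookkeeping at the very end: the paper applies Theorem~\ref{T.1} to \emph{both} factors and evaluates the resulting Gaussian convolution, while you apply it only to the second factor and then reuse Proposition~\ref{genIgl} (exactly as in the proof of Theorem~\ref{T.1}) for the first; these are the same computation. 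Your handling of the $o(1/\sqrt{n+x})$ claim via the Chebyshev bound $\pr(S_n\ge x)\le 4n\sigma^2/x^2$ for $x\ge2na$ is the same device the paper uses, and your $\omega_n\sqrt n$ truncation in place of the paper's $\varepsilon n$ split is an equivalent refinement.
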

\begin{proof}
Fix $\varepsilon>0$. If $\vert x-na\vert>n\varepsilon$ then by the Chebyshev inequality
\begin{align}
\pr(S_n=x, M_{n-1}<x+y,\tau_z>n)&\leq\pr\left(\vert S_n-an\vert\geq\vert x-an\vert\right)=o\left(\frac{1}{n+x}\right).
\end{align}
Thus, it remains to consider the case $\vert x-na\vert\leq\varepsilon n$. Set again $m=\left[n/2\right].$
By the Markov property,
\begin{align}\label{split.sum2}
\nonumber
&\pr(S_n=x, M_{n-1}<x+y, \tau_z>n)\\
\nonumber
&=\sum_{u=1-z}^{x+y-1}\pr(S_m=u, M_m< x+y,\tau_z>m)\\
\nonumber
&\hspace{2cm}\times\pr(S_{n-m}=x-u,M_{n-m}<x+y-u,\tau_{z+u}>n-m)\\
&=\Sigma_1+\Sigma_2,
\end{align}
where $\Sigma_1$ is the sum over $u\in\left(-z,[an/4]\right]$, and $\Sigma_2$ over $u\in\left([an/4]+1, x+y-1\right)$.

Using the Chebyshev inequality once again, we obtain
$$\Sigma_1\leq\pr\left(S_m\leq\left[\frac{an}{4}\right]\right)=o\left(\frac{1}{n}\right).$$
Therefore,
$$\pr(S_n=x, M_{n-1}<x+y,\tau_z>n)=\Sigma_2+o\left(\frac{1}{n}\right).$$
By the Kolmogorov inequality, for $x\ge (a-\varepsilon)n$ and $y\ge0$,
\begin{align*}
\pr(M_m\geq x+y)=o\left(\frac{1}{n}\right).
\end{align*}
Consequently,
\begin{align}\label{first.term}
\nonumber
\pr(S_m&=u, M_m< x+y, \tau_z>m)\\
\nonumber
&=\pr(S_m=u,\tau_z>m)-\pr(S_m=u,M_m\ge x+y,\tau>m)\\
&=\pr(S_m=u,\tau_z>m)+o\left(\frac{1}{n}\right).
\end{align}
Using once again the Kolmogorov inequality, we get
$$
\mathbf{P}(\tau_{z+u}<n-m)
=\pr\left(\min_{k\le n-m} S_k<-z-u\right)
=o\left(\frac{1}{n}\right)
$$
uniformly in $u\ge an/4$.

Therefore,
\begin{align}\label{rev.time}
\nonumber
&\pr(S_{n-m}=x-u,M_{n-m}<x+y-u,\tau_{z+u}>n-m)=\\
&\hspace{1cm}\pr(S_{n-m}=x-u,M_{n-m}<x+y-u)+o\left(\frac{1}{n}\right).
\end{align}
Furthermore, by the duality lemma,
\begin{align*}
\pr(S_{n-m}=x-u, M_{n-m}<x+y-u)=\pr(S_{n-m}=x-u,\tau_y>n-m).
\end{align*}
Plugging these equalities into $\Sigma_2$, we conclude that
\begin{align*}
\Sigma_2=\sum_{u=[an/4]+1}^{x+y-1}\pr(S_m=u,\tau_z>m)\pr(S_{n-m}=x-u,\tau_y>n-m)+o\left(\frac{1}{n}\right).
\end{align*}
Applying now Theorem \ref{T.1}, we finally get
\begin{align*}
\Sigma_2&=\frac{\pr(\tau_z=\infty)\pr(\tau_y=\infty)}{2\pi\sigma^2\frac{n}{2}}\sum_{u=[an/4]}^{x+y-1}e^{-(u-am)^2/2\sigma^2m}e^{-(x-u-a(n-m))^2/2\sigma^2(n-m)}\\
&\hspace{2cm}+o\left(\frac{1}{\sqrt{n}}\right)\\
&=\frac{\pr(\tau_z=\infty)\pr(\tau_y=\infty)}{\sqrt{2\pi\sigma^2n}}e^{-(x-an)^2/2\sigma^2n}+o\left(\frac{1}{\sqrt{n}}\right).
\end{align*}
This completes the proof.
\end{proof}
\section{Proofs of main results.}
\subsection{Proof of Theorem~\ref{thm:excursion}}
Clearly,
\begin{align}\label{T8.0}
\nonumber
&\pr(M_\tau=x, \theta_\tau=k,\tau=n+1)\\
\nonumber
&\hspace{1cm}=\pr(M_n=x,\theta_n=k,\tau=n+1)\\
&\hspace{1cm}=\sum_{y=1}^x\pr(M_n=x, \theta_n=k, S_n=y,\tau>n)\pr(X_{n+1}\leq-y).
\end{align}
Furthermore, for every $y\in\lbrace1,2\cdots,x\rbrace$ we have
\begin{align}\label{T8.1}
\nonumber
&\pr(M_n=x, \theta_n=k, S_n=y,\tau>n)\\
&\hspace{1cm}=\pr(S_k=x,\theta_k=k,\tau>k)\pr(S_{n-k}=y-x,M_{n-k}\leq 0,\min_{j\leq n-k} S_j>-x).
\end{align}
Consider a new measure $\widehat{\pr}$ given by
$$\widehat{\pr}(X_k\in du)=\frac{e^{\lambda u}}{\varphi({\lambda})}\pr(X_k\in du),\quad k\ge1.$$
Then one has
\begin{align}
\label{T8.1a}
\nonumber
\pr(S_k=x, \theta_k=k,\tau>k)&=e^{-\lambda x}\widehat{\pr}(S_k=x,\theta_k=k,\tau>k)\\
&=e^{-\lambda x}\widehat{\pr}(S_k=x,M_{k-1}<x,\tau>k).
\end{align}
Applying Theorem~\ref{snmax} with $z=y=0$, we obtain, uniformly in $x$,
\begin{align}\label{T8.2}
e^{\lambda x}\pr(S_k=x,\theta_k=k,\tau>k)
=\frac{\widehat{\pr}^2(\tau=\infty)}{\sqrt{2\pi\widehat{\sigma}^2k}}e^{-(x-k\widehat{a})^2/2\widehat{\sigma^2}k}
+o\left(\frac{1}{\sqrt{x+k}}\right).
\end{align}
For every $k\geq0$ set $\overline{S}_k=-S_k$. Then
\begin{align}\label{T8.2a}
\nonumber
&\pr(S_{n-k}=y-x,M_{n-k}\leq0,\min_{j\leq n-k}S_j>-x)\\
\nonumber
&\hspace{1cm}=\pr(\overline{S}_{n-k}=x-y,\min_{j\leq n-k}\overline{S}_j\geq 0,\max_{j\leq n-k}\overline{S}_j<x)\\
&\hspace{1cm}=\pr(\overline{S}_{n-k}=x-y,\overline{\tau}_1>n-k,\overline{M}_{n-k}<x).
\end{align}
Applying Theorem~\ref{snmax} to the random walk $\lbrace\overline{S}_n\rbrace$, we get, for every fixed $y$,
\begin{align}\label{T8.3}
\nonumber
&\pr(\bar{S}_{n-k}=x-y,\bar{\tau}_1>n-k,\bar{M}_{n-k}<x)\\
&\hspace{0.5cm}=\frac{\pr(\overline\tau_1=\infty)\pr(\overline\tau_y=\infty)}{\sqrt{2\pi\sigma^2(n-k)}}
e^{-(x-a(n-k))^2/2\sigma^2(n-k)}+o\left(\frac{1}{\sqrt{x+(n-k)}}\right).
\end{align}
Furthermore, by \eqref{concentration},
\begin{align*}
\widehat{\pr}(S_k=x,\theta_n=k,\tau>k)\leq\widehat{\pr}(S_k=x)\leq\frac{c}{\sqrt{k}}
\end{align*}
and
\begin{align*}
\pr(\overline{S}_{n-k}=x-y,\overline{\tau}_1>n-k,\overline{M}_{n-k}<x)
\leq\pr(\overline{S}_{n-k}=x-y)\leq\frac{c}{\sqrt{n-k}}.
\end{align*}
Combining these estimates with \eqref{T8.1} and \eqref{T8.2a}, we obtain
\begin{align}\label{T8.4}
\nonumber
e^{\lambda x}\sum_{y=N+1}^x\pr(M_n&=x,\theta_n=k,\tau>n)\pr(X_{n+1}\leq-y)\\
&\leq \frac{c}{\sqrt{k(n-k)}}\sum_{y=N+1}^\infty\pr(X_{n+1}\leq-y).
\end{align}
Combining \eqref{T8.2} and \eqref{T8.3}, we conclude that
\begin{align}\label{T8.5}
\nonumber
&e^{\lambda x}\sum_{y=1}^N\pr(M_n=x, S_n=y,\theta_n=k,\tau>n)\pr(X_{n+1}\leq-y)\\
\nonumber
&\hspace{1cm}
=\frac{\widehat{\pr}^2(\tau=\infty)\pr(\overline{\tau}_1=\infty)}{2\pi\sqrt{\sigma^2\widehat{\sigma}^2k(n-k)}}
\exp\left\lbrace-\frac{(x-k\widehat{a})^2}{2\widehat{\sigma}^2k}-\frac{(x-a(n-k))^2}{2\sigma^2(n-k)}\right\rbrace
\Sigma_N\\
&\hspace{3cm}+o\left(\frac{1}{\sqrt{k(n-k)}}\right),
\end{align}
$$
\Sigma_N:=\sum_{y=1}^N\pr(\overline{\tau}_y=\infty)\pr(X_1\leq-y).
$$
Clearly,
$$
\Sigma_N\to \sum_{y=1}^\infty\pr(\overline{\tau}_y=\infty)\pr(X_1\leq-y)\quad\text{as }N\to\infty.
$$
Plugging now \eqref{T8.5} and \eqref{T8.4} into \eqref{T8.0} and letting $N\rightarrow\infty$, we conclude that
\begin{align*}
&e^{\lambda x}\pr(M_\tau=x,\theta_\tau=k,\tau=n+1)=\\
&\frac{Q}{2\pi\sqrt{k(n-k)}}\exp\left\lbrace-\frac{(x-k\widehat{a})^2}{2\widehat{\sigma}^2k}-\frac{(x-a(n-k))^2}{2\sigma^2(n-k)}\right\rbrace+o\left(\frac{1}{\sqrt{k(n-k)}}\right),
\end{align*}
where
\begin{equation}\label{T8.6}
Q=\frac{\widehat{\pr}^2(\tau=\infty)\pr(\bar{\tau}_1=\infty)}{\widehat{\sigma}\sigma}
\sum_{y=1}^\infty\pr(\bar{\tau_y}=\infty)\pr(X_1\leq-y).
\end{equation}

\subsection{Proof of Corollary~\ref{cor:tau-theta}}
We first prove \eqref{theta-local}. Using \eqref{T8.1a}, we obtain
\begin{align*}
\mathbf{P}(\theta_\tau=k, M_\tau=x)
&=\mathbf{P}(S_k=x,M_{k-1}<x,\tau>k)\mathbf{P}(\tau_+>\tau_x)\\
&=e^{-\lambda x}\widehat{\mathbf{P}}(S_k=x,M_{k-1}<x,\tau>k)\mathbf{P}(\tau_+>\tau_x).
\end{align*}
It is obvious that $\mathbf{P}(\tau_+>\tau_x)\sim\mathbf{P}(\tau_+=\infty)$ as $x\to\infty$.
From this relation and from Theorem~\ref{snmax} with $y=z=0$ we have 
\begin{align*}
e^{\lambda x}\mathbf{P}(\theta_\tau=k, M_\tau=x)
&=\frac{c}{\sqrt{k}}e^{-(x-\widehat{a}k)^2/2\widehat\sigma^2k}
+o\left(\frac{1}{\sqrt{k+x}}\right)\\
&=\frac{c\sqrt{\widehat{a}}}{\sqrt{x}}e^{-\widehat{a}^3(k-x/\widehat{a})^2/2\widehat\sigma^2x}
+o\left(\frac{1}{\sqrt{k+x}}\right).
\end{align*}
Combining this expression with \eqref{Cramer.local}, we conclude that, uniformly in $k$,
$$
\sqrt{x}\mathbf{P}(\theta_\tau=k| M_\tau=x)
=\frac{c\sqrt{\widehat{a}}}{c_0}e^{-\widehat{a}^3(k-x/\widehat{a})^2/2\widehat\sigma^2x}+o(1).
$$
Summing over all $k$ satisfying  $|k-x/\widehat{a}|\le A\sqrt{x}$
and letting $A\to\infty$, we obtain
$$
\lim_{A\to\infty}\lim_{x\to\infty}
\mathbf{P}(|\theta_\tau-x/\widehat{a}|\le A\sqrt{x}|M_\tau=x)
=\frac{c\sqrt{2\pi\widehat{\sigma}^2}}{c_0\widehat{a}}.
$$
It remains to note that \eqref{loc.cond} implies that the left hand side in the previous relation equals $1$. Therefore,
$c=c_0\widehat{a}/\sqrt{2\pi\widehat\sigma^2}$ and
\eqref{theta-local} is proven.

By the total probability formula,
\begin{align*}
\mathbf{P}(\tau-\theta_\tau=j, M_\tau=x)
&=\sum_{k=1}^\infty\mathbf{P}(\theta_\tau=k,\tau=k+j,M_\tau=x)\\
&=\sum_{k=1}^\infty\mathbf{P}(S_k=x,M_{k-1}<x,\tau>k)
\mathbf{P}(\tau_+>\tau_x=j)
\end{align*}
and
\begin{align*}
\mathbf{P}(M_\tau=x)=
\sum_{k=1}^\infty\mathbf{P}(S_k=x,M_{k-1}<x,\tau>k)
\mathbf{P}(\tau_+>\tau_x).
\end{align*}
Therefore, uniformly in $j$,
\begin{align}
\label{T8.6b}
\nonumber
 \mathbf{P}(\tau-\theta_\tau=j| M_\tau=x)
 &=\frac{\mathbf{P}(\tau_+>\tau_x=j)}{\mathbf{P}(\tau_+>\tau_x)}\\
 &\sim
 \frac{\mathbf{P}(\tau_+>\tau_x=j)}{\mathbf{P}(\tau_+=\infty)},
 \quad x\to\infty.
\end{align}
Furthermore,
\begin{align*}
\mathbf{P}(\tau_+>\tau_x=j)
=\mathbf{P}(\overline{M}_j\ge x,\overline{M}_{j-1}<x,\overline{\tau}_1>j).
\end{align*}
If $j\le x/2a$ then, by the Kolmogorov inequality,
\begin{equation}
\label{T8.6a}
\mathbf{P}(\tau_+>\tau_x=j)\le \mathbf{P}(\overline{M}_j\ge x)
=O\left(\frac{1}{x}\right).
\end{equation}
Therefore, it remains to prove \eqref{theta-tau-local} for 
$j> x/2a$. For such values of $j$ we shall use the following
representation:
\begin{align}
\label{T8.7}
\nonumber
\mathbf{P}(\tau_+>\tau_x=j)
&=\mathbf{P}(\overline{M}_j\ge x,\overline{M}_{j-1}<x,\overline{\tau}_1>j)\\
&=\sum_{y=1}^{x}\mathbf{P}(\overline{M}_{j-1}<x,
\overline{S}_{j-1}=x-y,\overline{\tau}_1>j-1)\mathbf{P}(X_1\le-y).
\end{align}

Fix some $N\ge1$. Using \eqref{concentration}, we obtain  
\begin{align}
\label{T8.8}
\nonumber
&\sum_{y=N}^{x}\mathbf{P}(\overline{M}_{j-1}<x,
\overline{S}_{j-1}=x-y,\overline{\tau}_1>j-1)
\mathbf{P}(X_1\le-y)\\
\nonumber
&\hspace{2cm}\le \sum_{y=N}^{x}\mathbf{P}(\overline{S}_{j-1}=x-y)\mathbf{P}(X_1\le-y)\\
&\hspace{2cm}\le\frac{c_1}{\sqrt{j-1}}\sum_{y=N}^{x}
\mathbf{P}(X_1\le-y)
\le\frac{c_1}{\sqrt{x}}\sum_{y=N}^{\infty}
\mathbf{P}(X_1\le-y).
\end{align}
Applying Theorem~\ref{snmax} to the random walk
$\{\overline{S}_n\}$, we have
\begin{align}
\label{T8.9}
\nonumber
&\sum_{y=1}^{N-1}\mathbf{P}(\overline{M}_{j-1}<x,
\overline{S}_{j-1}=x-y,\overline{\tau}_1>j-1)
\mathbf{P}(X_1\le-y)\\
&\hspace{0.5cm}=\frac{\mathbf{P}(\overline{\tau}_1=\infty)}{\sqrt{2\pi\sigma^2 j}}e^{-(x-aj)^2/2\sigma^2 j}
\sum_{y=1}^{N-1}\mathbf{P}(\overline{\tau}_y=\infty)
\mathbf{P}(X_1\le-y)+o\left(\frac{1}{\sqrt{x+j}}\right).
\end{align}
Combining \eqref{T8.7}---\eqref{T8.9} and letting $N\to\infty$,
we arrive at the relation
\begin{align}
\label{T8.10}
\nonumber
\mathbf{P}(\tau_+>\tau_x=j)
&=\frac{c}{\sqrt{2\pi\sigma^2 j}}e^{-(x-aj)^2/2\sigma^2 j}
+o\left(\frac{1}{\sqrt{x+j}}\right)\\
&=\frac{c\sqrt{a}}{\sqrt{2\pi\sigma^2 x}}
e^{-a^3(j-x/a)^2/2\sigma^2 x}
+o\left(\frac{1}{\sqrt{x}}\right),
\quad j\ge x/2a.
\end{align}
Plugging \eqref{T8.10} into \eqref{T8.6b} and taking into account \eqref{T8.6a} we conclude that, uniformly in $j$,
$$
\mathbf{P}(\tau-\theta_\tau=j|M_\tau=x)
=\frac{c'}{\sqrt{2\pi\sigma^2 x}}
e^{-a^3(j-x/a)^2/2\sigma^2 x}
+o\left(\frac{1}{\sqrt{x}}\right).
$$
Thus, it remains to show that $c'=a^{3/2}$.
It suffices to repeat the argument from the proof of \eqref{theta-local} and to notice that
\begin{align*}
&\lim_{A\to\infty}\lim_{x\to\infty}
\mathbf{P}(|\tau-\theta_\tau-x/a|>A\sqrt{x}|M_\tau=x)\\
&\hspace{2cm}\le\lim_{A\to\infty}\lim_{x\to\infty}
\frac{\mathbf{P}(|\tau_x-x/a|>A\sqrt{x})}
{\mathbf{P}(\tau_+=\infty)}=0,
\end{align*}
which follows from \eqref{T8.6b} and from the Kolmogorov inequality. This completes the proof of \eqref{theta-tau-local}.

\end{document}